\def\dom{\mathop{\mathrm{Dom}}\nolimits}
\def\im{\mathop{\mathrm{Im}}\nolimits}
\def\rank{\mathop{\mathrm{rank}}\nolimits}
\def\d{\mathrm{d}}
\def\id{\mathrm{id}}
\def\N{\mathbb N}
\def\PT{\mathcal{PT}}
\def\T{\mathcal{T}}
\def\Sym{\mathcal{S}}
\def\DP{\mathcal{DP}}
\def\A{\mathcal{A}}
\def\B{\mathcal{B}}
\def\C{\mathcal{C}}
\def\D{\mathcal{D}}
\def\ODP{\mathcal{ODP}}
\def\DI{\mathcal{DI}}
\def\OPDI{\mathcal{OPDI}}
\def\ODI{\mathcal{ODI}}
\def\MDI{\mathcal{MDI}}
\def\PODI{\mathcal{PODI}}
\def\POI{\mathcal{POI}}
\def\POPI{\mathcal{POPI}}
\def\PORI{\mathcal{PORI}}
\def\PO{\mathcal{PO}}
\def\POD{\mathcal{POD}}
\def\POP{\mathcal{POP}}
\def\POR{\mathcal{POR}}
\def\I{\mathcal{I}}
\newcommand{\trans}[1]{\left(\begin{smallmatrix} #1 \end{smallmatrix}\right)}
\newtheorem{theorem}{Theorem}[section]
\newtheorem{proposition}[theorem]{Proposition}
\newtheorem{lemma}[theorem]{Lemma}
\newtheorem{example}[theorem]{Example}
\newenvironment{proof}{\begin{trivlist}\item[\hskip%
\labelsep{\bf Proof.}]}%
{\qed\rm\end{trivlist}}
\newcommand{\qed}{{\unskip\nobreak
\hfil\penalty50\hskip .001pt \hbox{}
          \nobreak\hfil
         \vrule height 1.2ex width 1.1ex depth -.1ex
           \parfillskip=0pt\finalhyphendemerits=0\medbreak}}
\newcommand{\lastpage}{\addresss}
\newcommand{\addresss}{\small \sf

\noindent{\sc Ilinka Dimitrova},
Department of Mathematics,
Faculty of Mathematics and Natural Science,
South-West University "Neofit Rilski",
2700 Blagoevgrad,
Bulgaria;
e-mail: ilinka\_dimitrova@swu.bg.

\medskip

\noindent{\sc V\'\i tor H. Fernandes},
Center for Mathematics and Applications (NovaMath)
and Department of Mathematics, FCT NOVA,
Faculdade de Ci\^encias e Tecnologia,
Universidade Nova de Lisboa,
Monte da Caparica,
2829-516 Caparica,
Portugal;
e-mail: vhf@fct.unl.pt.

\medskip

\noindent{\sc J\"{o}rg Koppitz},
Institute of Mathematics and Informatics,
Bulgarian Academy of Sciences,
1113 Sofia,
Bulgaria;
e-mail: koppitz@math.bas.bg.

\medskip

\noindent{\sc Teresa M. Quinteiro},
Instituto Superior de Engenharia de Lisboa,
1950-062 Lisboa,
Portugal.
Also:
Center for Mathematics and Applications (NovaMath),
Faculdade de Ci\^encias e Tecnologia,
Universidade Nova de Lisboa,
Monte da Caparica,
2829-516 Caparica,
Portugal;
e-mail: tmelo@adm.isel.pt.
}
\title{On three submonoids of
        the dihedral inverse monoid on a finite set}
\author{I. Dimitrova, V\'\i tor H. Fernandes, J. Koppitz and T.M. Quinteiro.}
\begin{document}

\maketitle

\begin{abstract}
In this paper we consider three submonoids of the dihedral inverse monoid $\DI_n$,
namely its submonoids $\OPDI_n$, $\MDI_n$ and $\ODI_n$
of all orientation-preserving, monotone and order-preserving transformations, respectively.
For each of these three monoids, we compute the cardinality, give descriptions of Green's relations and determine the rank.
\end{abstract}

\medskip

\noindent{\small 2020 \it Mathematics subject classification: \rm 20M10, 20M20, 05C12, 05C25.}

\noindent{\small\it Keywords: \rm dihedral inverse monoid, transformations, orientation, monotonicity, partial isometries, cycle graphs, rank.}

\section{Introduction and Preliminaries}\label{presection}

Let $\Omega$ be a set and let $A\subseteq \Omega$. A mapping $\alpha:A\rightarrow \Omega$ is called a \textit{partial transformation} of $\Omega$.
We denote by $\dom(\alpha)$ and $\im (\alpha)$ the domain and the image (range) of $\alpha$, respectively. The natural number
$\rank(\alpha) = |\im(\alpha)|$ is called the rank of $\alpha$.
Clearly, $A=\dom(\alpha)$. If $A=\Omega$ then $\alpha$ is called a \textit{full transformation}. If $A=\emptyset$ then $\alpha$ is called the empty transformation and denoted by $\emptyset$.
Given partial transformations $\alpha$ and $\beta$, the composition $\alpha\beta$ is the partial transformation defined by $x(\alpha\beta)=(x\alpha)\beta$ for all $x\in \dom(\alpha\beta)=(\im(\alpha)\cap \dom(\beta))\alpha^{-1} = \{x \in \dom(\alpha) \mid x\alpha \in \dom(\beta)\}$. Observe that $\im(\alpha\beta)=(\im(\alpha)\cap\dom(\beta))\beta$.
Denote by $\PT(\Omega)$ the monoid (under composition) of all
partial transformations on $\Omega$, by $\T(\Omega)$ the submonoid of $\PT(\Omega)$ consisting of all
full transformations on $\Omega$, by $\I(\Omega)$
the \textit{symmetric inverse monoid} on $\Omega$, i.e.
the inverse submonoid of $\PT(\Omega)$ consisting of all
partial permutations on $\Omega$,
and by $\Sym(\Omega)$ the \textit{symmetric group} on $\Omega$,
i.e. the subgroup of $\PT(\Omega)$ consisting of all
permutations on $\Omega$.
Recall that a semigroup $S$ is called inverse if, for each $s\in S$, there exists a unique $s'\in S$ with $s=ss's$ and $s'=s'ss'$ ($s'$ is
called inverse of $s$).
If $\Omega$ is a finite set with $n$ elements ($n\in\N$),
say $\Omega=\Omega_n=\{1,2,\ldots,n\}$, as usual, we denote
$\PT(\Omega)$, $\T(\Omega)$, $\I(\Omega)$ and $\Sym(\Omega)$ simply by $\PT_n$, $\T_n$, $\I_n$ and $\Sym_n$, respectively.
An element $\alpha$ belonging to $\PT(\Omega)$ with $\dom(\alpha)=\{a_{1},a_{2},\ldots,a_{k}\}, k\in\{1,2,\ldots,n\}$, can be written in the following form
$$\alpha=\begin{pmatrix}
a_{1}&a_{2}&\cdots&a_{k}\\
a_{1}\alpha&a_{2}\alpha&\cdots&a_{k}\alpha
\end{pmatrix}.$$

\begin{example}
\rm Let $\alpha, \beta \in \PT_{6}$ be the following partial transformations:
$$\alpha=\begin{pmatrix}
1&2&4&5&6\\
3&2&5&3&1
\end{pmatrix}
\quad
\textrm{and}
\quad
\beta=\begin{pmatrix}
2&3&4&6\\
2&1&6&4
\end{pmatrix}.$$
Then, for the compositions $\alpha\beta$ and $\beta\alpha$, we obtain
$$\alpha\beta=\begin{pmatrix}
1&2&5\\
1&2&1
\end{pmatrix}
\quad
\textrm{and}
\quad
\beta\alpha=\begin{pmatrix}
2&3&4&6\\
2&3&1&5
\end{pmatrix}.$$
\end{example}

\smallskip

Now, let $G=(V,E)$ be a finite simple connected graph. The (\textit{geodesic}) \textit{distance} between two vertices $x$ and $y$ of $G$, denoted by $\d_G(x,y)$, is the length of a shortest path between $x$ and $y$, i.e. the number of edges in a shortest path between $x$ and $y$.

Let $\alpha\in\PT(V)$. We say that $\alpha$ is a \textit{partial isometry} or \textit{distance preserving partial transformation} of $G$ if
$$
\d_G(x\alpha,y\alpha) = \d_G(x,y) ,
$$
for all $x,y\in\dom(\alpha)$. Denote by $\DP(G)$ the subset of $\PT(V)$ of all partial isometries of $G$.
Clearly, $\DP(G)$ is a submonoid of $\PT(V)$.
As a consequence of the property
$\d_G(x,y)=0$ if and only if $x=y$,
for all $x,y\in V$, it immediately follows that $\DP(G)\subseteq\I(V)$.
Moreover, $\DP(G)$ is an inverse submonoid of $\I(V)$
(see \cite{Fernandes&Paulista:2022arxiv}).

\smallskip

Observe that, if $G=(V,E)$ is a complete graph, i.e. $E=\{\{x,y\}\mid x,y\in V, x\neq y\}$, then $\DP(G)=\I(V)$.

\smallskip

For $n\in\N$, consider the undirected path $P_n$ with $n$ vertices, i.e.
$$
P_n=\left(\{1,2,\ldots,n\},\{\{i,i+1\}\mid i=1,2,\ldots,n-1\}\right).
$$
Then, obviously, $\DP(P_n)$ coincides with the monoid
$$
\DP_n=\{\alpha\in\I_n \mid |i\alpha-j\alpha|=|i-j|, \mbox{for all $i,j\in\dom(\alpha)$}\}
$$
of all partial isometries on $\Omega_n$.
The study of partial isometries on $\Omega_n$ was initiated
by Al-Kharousi et al.~\cite{AlKharousi&Kehinde&Umar:2014,AlKharousi&Kehinde&Umar:2016}.
The first of these two papers is dedicated to investigating some combinatorial properties of
the monoid $\DP_n$ and of its submonoid $\ODP_n$ of all order-preserving (considering the usual order of $\N$) partial isometries, in particular, their cardinalities. The second paper presents the study of some of their algebraic properties, namely Green's structure and ranks.
Presentations for both the monoids $\DP_n$ and $\ODP_n$ were given by Fernandes and Quinteiro in \cite{Fernandes&Quinteiro:2016}
and the maximal subsemigroups of $\ODP_n$ were characterized by Dimitrova in \cite{Dimitrova:2013}.

The monoid $\DP(S_n)$ of all partial isometries of a star graph $S_n$ with $n$ vertices ($n\geqslant1$)
was considered by Fernandes and Paulista in \cite{Fernandes&Paulista:2022arxiv}.
They determined the rank and size of $\DP(S_n)$ as well as described its Green's relations.
A presentation for $\DP(S_n)$ was also exhibited in \cite{Fernandes&Paulista:2022arxiv}.

Next, for $n\geqslant3$, consider the \textit{cycle graph}
$$
C_n=(\{1,2,\ldots, n\}, \{\{i,i+1\}\mid i=1,2,\ldots,n-1\}\cup\{\{1,n\}\})
$$
with $n$ vertices.  Notice that cycle graphs and cycle subgraphs play a fundamental role in Graph Theory.
The monoid $\DP(C_n)$ of all partial isometries of the cycle graph $C_n$ was studied by Fernandes and Paulista in \cite{Fernandes&Paulista:2022sub}.
They showed that $\DP(C_n)$ is an inverse submonoid of the monoid of all oriented partial
permutations on a chain with $n$ elements and, moreover, that it coincides with the inverse submonoid of $\I_n$
formed by all restrictions of a dihedral subgroup of $\Sym_n$ of order $2n$.
Therefore, in \cite{Fernandes&Paulista:2022sub}, $\DP(C_n)$ was called the \textit{dihedral inverse monoid} on $\Omega_n$ and, in this paper, from now on,
we denote $\DP(C_n)$ by the most appropriate notation $\DI_n$.
Recall also that in \cite{Fernandes&Paulista:2022sub} it was determined the cardinality and rank of $\DI_n$ as well as descriptions of its Green's relations and, furthermore,
presentations for $\DI_n$ were also given in that paper.

\smallskip

Next, suppose that $\Omega_n$ is a chain, e.g. $\Omega_n=\{1<2<\cdots<n\}$. A partial transformation $\alpha\in\PT_n$ is called \textit{order-preserving}
[\textit{order-reversing}] if, $x\leqslant y$ implies $x\alpha\leqslant y\alpha$
[$x\alpha\geqslant y\alpha$], for all $x,y \in \dom(\alpha)$.
A partial transformation is said to be \textit{monotone} if it is order-preserving or order-reversing.
It is clear that the product of two order-preserving or of two
order-reversing transformations is order-preserving and
the product of an order-preserving transformation by an
order-reversing transformation, or vice-versa, is
order-reversing.
We denote by $\PO_n$
the submonoid of $\PT_n$ of all order-preserving
transformations and by $\POD_n$ the submonoid of $\PT_n$
of all monotone transformations.
Let also $\POI_n=\PO_n\cap\I_n$,
the monoid of all order-preserving partial permutations of $\Omega_n$,
 and $\PODI_n=\POD_n\cap\I_n$,
 the monoid of all monotone partial permutations of $\Omega_n$,
 which are inverse submonoids of $\PT_n$.

Let $s=(a_1,a_2,\ldots,a_t)$
be a sequence of $t$ ($t\geqslant0$) elements
from the chain $\Omega_n$.
We say that $s$ is \textit{cyclic}
[\textit{anti-cyclic}] if there
exists no more than one index $i\in\{1,\ldots,t\}$ such that
$a_i>a_{i+1}$ [$a_i<a_{i+1}$],
where $a_{t+1}$ denotes $a_1$.
We also say that $s$ is \textit{oriented} if $s$ is cyclic or $s$ is anti-cyclic
(see \cite{Catarino&Higgins:1999,Higgins&Vernitski:2022,McAlister:1998}).
Given a partial transformation $\alpha\in\PT_n$ such that
$\dom(\alpha)=\{a_1<\cdots<a_t\}$, with $t\geqslant0$, we
say that $\alpha$ is \textit{orientation-preserving}
[\textit{orientation-reversing}, \textit{oriented}] if the sequence of its images
$(a_1\alpha,\ldots,a_t\alpha)$ is cyclic [anti-cyclic, oriented].
It is easy to show
that the product of two orientation-preserving or of two
orientation-reversing transformations is orientation-preserving and
the product of an orientation-preserving transformation by an
orientation-reversing transformation, or vice-versa, is
orientation-reversing.
We denote by $\POP_n$ the submonoid of $\PT_n$
of all orientation-preserving
transformations and by $\POR_n$ the
submonoid of $\PT_n$ of all
oriented  transformations.
Consider also the inverse submonoids $\POPI_n=\POP_n\cap\I_n$,
of all orientation-preserving partial permutations,
and $\PORI_n=\POR_n\cap\I_n$,
of all oriented partial permutations, of $\PT_n$.

Notice that for $n\geq 3$, $\POI_n\subsetneq\PODI_n\subsetneq\PORI_n$
and $\POI_n\subsetneq\POPI_n\subsetneq\PORI_n$, by definition.

\begin{example}
\rm Let us consider the following transformations of $\I_{5}$:
%\begin{small}
$$\alpha_{1}=\begin{pmatrix}
1&2&3\\
1&4&5
\end{pmatrix},
\quad
\alpha_{2}=\begin{pmatrix}
2&3&4&5\\
5&3&2&1
\end{pmatrix},
\quad
\alpha_{3}=\begin{pmatrix}
1&3&4&5\\
2&3&4&1
\end{pmatrix}
\quad
\textrm{and}
\quad
\alpha_{4}=\begin{pmatrix}
1&2&3&4&5\\
2&1&5&4&3
\end{pmatrix}.$$
%\end{small}
Then, we have $\alpha_{1}\in \POI_{5}$, $\alpha_{2}\in \PODI_{5}\setminus \POI_{5}$, $\alpha_{3}\in \POPI_{5}\setminus \POI_{5}$ and $\alpha_{4}\in \PORI_{5}\setminus \POPI_{5}$.
\end{example}

\smallskip

Now, let us consider the following permutations of $\Omega_n$ of order $n$ and $2$, respectively:
$$
g=\begin{pmatrix}
1&2&\cdots&n-1&n\\
2&3&\cdots&n&1
\end{pmatrix}
\quad\text{and}\quad
h=\begin{pmatrix}
1&2&\cdots&n-1&n\\
n&n-1&\cdots&2&1
\end{pmatrix}.
$$

It is clear that $g,h\in\DI_n$.
Moreover, for $n\geqslant3$, $g$ together with $h$ generate the well-known \textit{dihedral group} $\D_{2n}$ of order $2n$
(considered as a subgroup of $\Sym_n$). In fact, for $n\geqslant3$,
$$
\D_{2n}=\langle g,h\mid g^n=1,h^2=1, hg=g^{n-1}h\rangle=\{\id,g,g^2,\ldots,g^{n-1}, h,hg,hg^2,\ldots,hg^{n-1}\},
$$
where $\id$ denotes the identity transformation on $\Omega_n$,
and we have
$$
g^k=\begin{pmatrix}
1&2&\cdots&n-k&n-k+1&\cdots&n\\
1+k&2+k&\cdots&n&1&\cdots&k
\end{pmatrix},
\quad\text{i.e.}\quad
ig^k=\left\{\begin{array}{ll}
i+k & \mbox{if $1\leqslant i\leqslant n-k$}\\
i+k-n & \mbox{if $n-k+1\leqslant i\leqslant n$,}
\end{array}\right.
$$
and
$$
hg^k=\begin{pmatrix}
1&\cdots&k&k+1&\cdots&n\\
k&\cdots&1&n&\cdots&k+1
\end{pmatrix},
\quad\text{i.e.}\quad
ihg^k=\left\{\begin{array}{ll}
k-i+1 & \mbox{if $1\leqslant i\leqslant k$}\\
n+k-i+1 & \mbox{if $k+1\leqslant i\leqslant n$,}
\end{array}\right.
$$
for $0\leqslant k\leqslant n-1$.
Denote also by $\C_n$ the \textit{cyclic group} of order $n$ generated by $g$, i.e.
$$
\C_n=\langle g\mid g^n=1\rangle=\{\id,g,g^2,\ldots,g^{n-1}\}.
$$

\smallskip

Until the end of this paper, we will consider $n\geqslant3$.

\smallskip

For any two vertices $x$ and $y$ of $C_n$, we now denote the distance $\d_{C_n}(x,y)$ simply by $\d(x,y)$.
Notice that, we have
$$
\d(x,y)=\min \{|x-y|,n-|x-y|\}
= \left\{ \begin{array}{ll}
 |x-y| &\mbox{if  $|x-y|\leqslant\frac{n}{2}$}\\
n-|x-y| &\mbox{if $|x-y|>\frac{n}{2}$}
\end{array} \right.
$$
and so $0\leqslant\d(x,y)\leqslant\frac{n}{2}$,
for all $x,y \in \{1,2,\ldots,n\}$.
Observe also that
$$
\d(x,y)=\frac{n}{2}
\quad\Leftrightarrow\quad
|x-y|=\frac{n}{2}
\quad\Leftrightarrow\quad
n-|x-y|=\displaystyle\frac{n}{2}
\quad\Leftrightarrow\quad
|x-y|=n-|x-y|,
$$
in which case $n$ is even.

\smallskip

Recall that $\DI_n$ is the submonoid of the monoid $\PORI_n$ whose elements are precisely all restrictions of the dihedral group $\D_{2n}$ of order $2n$.
Let $\alpha \in \PT_n$ and let $A \subset \dom(\alpha)$. We denote by $\alpha|_A$ the restriction of $\alpha$ to $A$.
Moreover, it is also known exactly how many extensions in $\D_{2n}$ each element of $\DI_n$ has:

\begin{lemma}[{\cite[Lemma 1.1]{Fernandes&Paulista:2022sub}}] \label{fundlemma}
Let $\alpha \in \PT_n$. Then $\alpha \in\DI_n$ if and only if there exists $\sigma \in \D_{2n}$
such that $\alpha=\sigma|_{\dom(\alpha)}$.
Furthermore, for $\alpha \in \DI_n$, one has:
\begin{enumerate}
\item If either $|\dom(\alpha)|= 1$ or $|\dom(\alpha)|= 2$ and $\d(\min \dom(\alpha),\max \dom(\alpha))=\frac{n}{2}$
(in which case $n$ is even),
then there exists exactly two (distinct) permutations $\sigma,\sigma' \in\D_{2n}$ such that $\alpha= \sigma|_{\dom(\alpha)} = \sigma'|_{\dom(\alpha)}$;

\item If either $|\dom(\alpha)|= 2$ and $\d(\min \dom(\alpha),\max \dom(\alpha)) \neq \frac{n}{2}$ or $|\dom(\alpha)|\geqslant 3$,
then there exists exactly one permutation $\sigma \in\mathcal{D}_{2n}$ such that $\alpha= \sigma|_{\dom(\alpha)}$.
\end{enumerate}
\end{lemma}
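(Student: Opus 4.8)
The first assertion is exactly the characterization of $\DI_n=\DP(C_n)$ recalled just above the lemma (``$\alpha\in\DI_n$'' means $\alpha$ is a restriction of some element of $\D_{2n}$, and such a restriction necessarily has domain $\dom(\alpha)$, hence equals $\sigma|_{\dom(\alpha)}$), so the whole content is the count of extensions. The plan is to reduce this count to the order of a pointwise stabilizer. Fix $\alpha\in\DI_n$ and, by that characterization, choose $\sigma_0\in\D_{2n}$ with $\alpha=\sigma_0|_{\dom(\alpha)}$. For $\sigma\in\D_{2n}$ one has $\sigma|_{\dom(\alpha)}=\alpha$ if and only if $x\sigma=x\sigma_0$ for all $x\in\dom(\alpha)$, i.e. if and only if $\sigma\sigma_0^{-1}$ fixes $\dom(\alpha)$ pointwise. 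Hence the set of extensions of $\alpha$ inside $\D_{2n}$ is the coset $K\sigma_0$, where $K=\bigcap_{x\in\dom(\alpha)}\mathrm{Stab}_{\D_{2n}}(x)$; thus the number of extensions is $|K|$, and it suffices to show $|K|=2$ exactly in the two listed situations and $|K|=1$ otherwise.

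Next I would pin down the point stabilizers. Since $\D_{2n}$ acts transitively on the $n$ vertices of $C_n$ and $|\D_{2n}|=2n$, the orbit--stabilizer theorem gives $|\mathrm{Stab}_{\D_{2n}}(x)|=2$ for every vertex $x$; its non-identity element has order $2$ and fixes $x$, and since (from the explicit form of $g^k$ above) no non-trivial rotation fixes a vertex, that element is a reflection, the unique reflection $r_x$ of $\D_{2n}$ fixing $x$. So $\mathrm{Stab}_{\D_{2n}}(x)=\{\id,r_x\}$, and therefore $K=\{\id\}$ unless all the reflections $r_x$ $(x\in\dom(\alpha))$ coincide, in which case $K=\{\id,r\}$ for that common reflection $r$. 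Equivalently, $|K|=2$ if and only if $\dom(\alpha)\subseteq\mathrm{Fix}(r)$ for some reflection $r\in\D_{2n}$.

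It then remains to inspect fixed-point sets of reflections, and the two parities of $n$ behave differently --- this is the only genuinely delicate point. Using the explicit form of $hg^k$ given above, one checks that when $n$ is odd every reflection fixes exactly one vertex, whereas when $n$ is even a reflection fixes either no vertex or exactly two, and in the latter case the fixed pair is an antipodal pair $\{a,a+\frac n2\}$, i.e. a pair of vertices at distance $\frac n2$ (and every antipodal pair so arises). Feeding this into the criterion above: $\dom(\alpha)$ lies in the fixed set of a reflection precisely when $|\dom(\alpha)|=1$, or when $n$ is even and $\dom(\alpha)$ is a pair of vertices at distance $\frac n2$; in every other case --- in particular whenever $|\dom(\alpha)|\geqslant3$, or $|\dom(\alpha)|=2$ with the two points at distance $\neq\frac n2$ --- there is no such reflection and $K=\{\id\}$. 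This gives exactly statements (1) and (2).

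Finally, if one preferred not to invoke the recalled description of $\DI_n$ and wanted a self-contained proof of the biconditional, the non-trivial direction is that every $\alpha\in\DP(C_n)$ extends to some $\sigma\in\D_{2n}$. The natural route is to first treat $|\dom(\alpha)|\leqslant2$ by observing that two ordered pairs of distinct vertices lie in the same $\D_{2n}$-orbit if and only if they have the same distance --- an easy computation, using that rotations preserve, and reflections negate, the ``oriented gap'' $b-a$ modulo $n$ of a pair $(a,b)$ --- and then bootstrap to $|\dom(\alpha)|\geqslant3$ by picking $a,b\in\dom(\alpha)$ with $\d(a,b)\neq\frac n2$ (possible since at most one vertex is antipodal to a given one), extending $\alpha|_{\{a,b\}}$ to some $\sigma$, and checking that $\alpha\sigma^{-1}$, a partial isometry with the two non-antipodal fixed points $a$ and $b$, is forced to be the identity on its domain. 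That rigidity step is where the only real work lies: two ``distance spheres'' about non-antipodal centres meet in at most one vertex, since if they coincided as two-element sets the centres would have to be equal or antipodal.
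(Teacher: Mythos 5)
Your proposal is correct. Note first that the paper itself offers no proof of this statement: Lemma \ref{fundlemma} is imported verbatim from \cite[Lemma 1.1]{Fernandes&Paulista:2022sub}, so there is nothing in the present text to compare your argument against line by line. On its own merits, your argument is sound and arguably cleaner than a case analysis: reducing the number of extensions to $|K|$ for the pointwise stabilizer $K=\bigcap_{x\in\dom(\alpha)}\mathrm{Stab}_{\D_{2n}}(x)$ via the coset $K\sigma_0$ is exactly right, orbit--stabilizer correctly gives $|\mathrm{Stab}_{\D_{2n}}(x)|=2$ with the non-identity element a reflection (no non-trivial power of $g$ fixes a vertex, by the displayed formula for $ig^k$), and your parity analysis of the fixed-point sets of $hg^k$ (one fixed vertex when $n$ is odd; zero or an antipodal pair when $n$ is even) checks out against the explicit formula $ihg^k=k-i+1$ or $n+k-i+1$. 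The rigidity step in your final paragraph --- that two distance spheres about distinct, non-antipodal centres meet in at most one vertex, so a partial isometry of $C_n$ fixing such a pair fixes its whole domain --- is the right mechanism for the bootstrap to $|\dom(\alpha)|\geqslant3$, and the preliminary claim that ordered pairs of vertices are $\D_{2n}$-equivalent iff equidistant is correct. The only boundary remark worth adding: your coset framework gives $|K|=2n$ for $\alpha=\emptyset$, which is consistent with the lemma only because items 1 and 2 deliberately exclude $|\dom(\alpha)|=0$; it would be worth saying so explicitly.
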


Notice that for an even $n$, we have
$$
\begin{array}{rcl}
\B_2 & = & \{\alpha\in\DI_n\mid |\mbox{$\dom(\alpha)|=2$ and $\d(\min \dom(\alpha),\max \dom(\alpha))=\frac{n}{2}$}\} \\
& = &
\left\{
\begin{pmatrix}
i&i+\frac{n}{2}\\
j&j+\frac{n}{2}
\end{pmatrix},
\begin{pmatrix}
i&i+\frac{n}{2}\\
j+\frac{n}{2}&j
\end{pmatrix}
\mid
1\leqslant i,j\leqslant \frac{n}{2}
\right\}
\end{array}
$$
and so $|\B_2|=2(\frac{n}{2})^2=\frac{1}{2}n^2$.

\medskip

In this paper, we study three submonoids of $\DI_n$, namely
$\OPDI_n=\DI_n\cap\POPI_n$, the monoid of all orientation-preserving partial isometries of $C_n$,
$\MDI_n=\DI_n\cap\PODI_n$, the monoid of all monotone partial isometries of $C_n$,
and $\ODI_n=\DI_n\cap\POI_n$, the monoid of all order-preserving partial isometries of $C_n$.
Observe that $\DI_n$, $\OPDI_n$, $\MDI_n$ and $\ODI_n$ are all inverse submonoids of the symmetric inverse monoid $\I_n$,
$\ODI_n\subseteq\MDI_n$ and $\ODI_n\subseteq\OPDI_n$.
Also, observe that $\OPDI_3=\POPI_3$, $\MDI_3=\PODI_3$ and $\ODI_3=\POI_3$.

\begin{example}
\rm Let us consider the following transformations of $\DI_{5}$:
$$\alpha_{1}=\begin{pmatrix}
2&4&5\\
1&3&4
\end{pmatrix},
\quad
\alpha_{2}=\begin{pmatrix}
1&2&3\\
3&2&1
\end{pmatrix}
\quad
\textrm{and}
\quad
\alpha_{3}=\begin{pmatrix}
1&2&3&5\\
3&4&5&2
\end{pmatrix}.
$$
Then, we have $\alpha_{1}\in \ODI_{5}$, $\alpha_{2}\in \MDI_{5}\setminus \ODI_{5}$ and $\alpha_{3}\in \OPDI_{5}\setminus \ODI_{5}$.
\end{example}

\smallskip

This paper investigates algebraic, combinatorial and rank properties of each of the monoids $\ODI_{n}$, $\MDI_{n}$ and $\OPDI_{n}$.
In particular, we determine the cardinality (Section \ref{cards}, Theorem \ref{sizeopcoc}), describe the Green's relation $\mathcal{J}$ (Section \ref{greens}, Theorem \ref{greenJ}) and calculate the rank of each of these monoids.
The main results of the paper are presented in Section \ref{ranks} which is dedicated to establish generating sets (Proposition \ref{gensets}) and to determine the ranks of these three monoids (Theorem \ref{rankth}).

Recall that, for a monoid $M$, the Green's relations $\mathcal{L}$, $\mathcal{R}$, $\mathcal{J}$ and $\mathcal{H}$
are defined by
\begin{itemize}
\item $a\mathcal{L}b$ if and only if $Ma=Mb$ for $a,b\in M$,

\item $a\mathcal{R}b$ if and only if $aM=bM$ for $a,b\in M$,

\item $a\mathcal{J}b$ if and only if $MaM=MbM$ for $a,b\in M$, and

\item $\mathcal{H}=\mathcal{L}\cap \mathcal{R}$.
\end{itemize}

Green's relations are very useful tool in the study of semigroups/monoids. They help us to gain a deeper understanding of the internal structure of semigroups: we can identify subsemigroups, study the idempotent elements, and explore the congruence properties within the semigroup. Overall, the importance of Green's relations in semigroup theory lies in their ability to provide a systematic way to study and classify elements within semigroup, leading to valuable perceptions into their algebraic properties.

The notion rank or dimension belongs primarily to linear algebra. In semigroups, we normally define the \textit{rank} of a semigroup $S$ as
being the minimum size of a generating set of $S$, i.e. the minimum of the set $\{|X|\mid \mbox{$X\subseteq S$ and $X$ generates $S$}\}$.
For a discussion on rank properties in finite semigroups and other possible definitions, see \cite{Howie&Ribeiro:1999}.
The rank provides information about the complexity and algebraic properties of the semigroup. It helps us to understand the diversity of elements within the semigroup as higher-rank semigroups often exhibit more intricate behavior and possess a richer variety of elements.

\smallskip

For $n\geqslant3$,
it is well-known that $\Sym_n$
has rank $2$ (as a semigroup, a monoid or a group) and
$\T_n$, $\I_n$ and $\PT_n$ have
ranks $3$, $3$ and $4$, respectively.
The survey \cite{Fernandes:2002survey} presents
these results and similar ones for other classes of transformation monoids,
in particular, for monoids of order-preserving transformations and
for some of their extensions.
For example, the rank of the extensively studied monoid of all order-preserving transformations of a chain with $n$ elements is $n$,
a result proved by Gomes and Howie \cite{Gomes&Howie:1992} in 1992.
More recently, for instance, the papers
\cite{
Araujo&al:2015,
Dimitrova&al:2020,
Dimitrova&al:2021,
Dimitrova&Koppitz:2017,
Dimitrova&al:2017,
Fernandes&al:2014,
Fernandes&al:2019,
Fernandes&Quinteiro:2014,
Fernandes&Sanwong:2014}
are dedicated to the computation of the ranks of certain classes of transformation semigroups or monoids.

\medskip

For general background on Semigroup Theory and standard notations, we refer to Howie's book \cite{Howie:1995}.

\smallskip

We would like to point out that we made considerable use of computational tools, namely GAP \cite{GAP4}.

\section{Cardinality}\label{cards}

We begin this paper with some combinatorial considerations. Enumerative problems of an essentially combinatorial nature arise naturally
in the study of semigroups of transformations. Our main aim in this section is to find a formula for $|\ODI_n|$, $|\MDI_n|$ and $|\OPDI_n|$, respectively.

\medskip

By applying Lemma \ref{fundlemma} and counting all possible distinct orientation-preserving and order-preserving restrictions of permutations from $\D_{2n}$, we have:

\begin{theorem}\label{sizeopcoc}
One has
$$
|\ODI_n|= 3\cdot2^n+\frac{(n+1)n(n-1)}{6}-\frac{1+(-1)^n}{8}n^2-2n-2
$$
and
$$
|\OPDI_n|=
n2^n + \frac{n^2(n-1)}{2} -\frac{1+(-1)^n}{4}n^2-n+1.
$$
\end{theorem}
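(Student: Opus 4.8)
The plan is to count, via Lemma~\ref{fundlemma}, the order-preserving (respectively orientation-preserving) restrictions of the $2n$ permutations in $\D_{2n}$, organizing the count by rank. For each $k\in\{0,1,\dots,n\}$ I would determine the number $N_k$ of elements of $\DI_n$ of rank $k$ that lie in $\ODI_n$ (resp.\ $\OPDI_n$), and then sum $\sum_{k=0}^n N_k$. The key structural fact is that an element $\alpha\in\DI_n$ of rank $k$ is $\sigma|_{\dom(\alpha)}$ for some $\sigma\in\D_{2n}$, and by Lemma~\ref{fundlemma} this $\sigma$ is unique once $k\geqslant 3$, unique also when $k=2$ unless $\d(\min\dom\alpha,\max\dom\alpha)=\tfrac n2$, and there are exactly two choices of $\sigma$ when $k=1$ or when $k=2$ and the two domain points are antipodal (the set $\B_2$, which is already computed to have size $\tfrac12 n^2$). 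So the rough scheme is: count pairs $(\sigma,A)$ with $\sigma\in\D_{2n}$, $A\subseteq\Omega_n$, $|A|=k$, and $\sigma|_A$ order-preserving (resp.\ orientation-preserving); then correct for the overcounting in ranks $0,1,2$.

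For the \emph{orientation-preserving} count: a restriction $\sigma|_A$ is orientation-preserving iff the image sequence is cyclic. The rotations $g^j$ are themselves orientation-preserving, and any restriction of an orientation-preserving permutation is again orientation-preserving (stated in the preliminaries), so \emph{every} restriction of every $g^j$ contributes; that already gives, after removing overcounting, a term of the shape $n\cdot 2^n$ minus lower-order corrections. The reflections $hg^j$ are orientation-\emph{reversing}, so a restriction $hg^j|_A$ is orientation-preserving only when $|A|\leqslant 2$ — and for $|A|=2$ precisely when the image pair is both cyclic and anti-cyclic, i.e.\ any pair, so one has to be careful: actually for $|A|\leqslant 2$ every sequence is oriented in both senses, so each $hg^j$ contributes all its restrictions of rank $\leqslant 2$. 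Counting restrictions of rank $0$ (one: $\emptyset$), rank $1$ ($n$ singletons), and rank $2$ ($\binom n2$ pairs), and then carefully merging the $\sigma$/$\sigma'$ double-counts using $\B_2$, should assemble into the claimed $n2^n+\frac{n^2(n-1)}2-\frac{1+(-1)^n}4 n^2-n+1$. The parity term $\frac{1+(-1)^n}{4}n^2$ is exactly the $\B_2$-correction (it equals $\frac12|\B_2|$ when $n$ is even and $0$ when $n$ is odd).

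For the \emph{order-preserving} count the combinatorics is genuinely harder, because a restriction $g^j|_A$ is order-preserving only for special $A$: the cyclic sequence $(a_1 g^j,\dots,a_k g^j)$ is increasing iff the ``wrap-around'' of $g^j$ does not fall strictly inside $A$, equivalently iff $A\subseteq\{1,\dots,n-j\}$ or $A\subseteq\{n-j+1,\dots,n\}$ (and for $k\leqslant1$ trivially). So for each $j$ one counts subsets of an $(n-j)$-element block or a $j$-element block, with inclusion–exclusion for those lying in both; summing $\sum_{j=0}^{n-1}(2^{n-j}+2^{j}-2)$ gives the leading $3\cdot2^n$-type term (after handling $j=0$, i.e.\ the identity, separately, which contributes all $2^n$ subsets). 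The reflections $hg^j$ restricted to $A$ are order-preserving only when $|A|\leqslant 1$, \emph{or} $|A|=2$ with the image pair happening to be increasing — one must count, for each $j$, the pairs $\{x,y\}$ with $x<y$ and $xhg^j<yhg^j$; since $hg^j$ reverses order on each of two arcs this is again an arithmetic count. Finally I would subtract the overcounting: rank-$0$ and rank-$1$ restrictions are each hit by two permutations, and the antipodal rank-$2$ restrictions in $\B_2$ that happen to be order-preserving are hit twice. Assembling all pieces and simplifying the polynomial-plus-exponential expression should yield $3\cdot2^n+\frac{(n+1)n(n-1)}6-\frac{1+(-1)^n}8 n^2-2n-2$; note $\frac{(n+1)n(n-1)}{6}=\binom{n+1}{3}$, which hints that the rank-$\geqslant3$ order-preserving restrictions are counted by a clean binomial sum $\sum_k\binom{?}{k}$-type identity.

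The main obstacle will be the bookkeeping of the low-rank overcounting in tandem with the parity cases: the $\B_2$ elements must be intersected with the order-preserving (resp.\ orientation-preserving) condition, and whether a given antipodal restriction is order-preserving depends on $n$ being even and on which of the two lifts $\sigma,\sigma'$ one looks at, so the $\frac{1+(-1)^n}{8}n^2$ and $\frac{1+(-1)^n}{4}n^2$ correction terms have to be derived with care. A clean way to avoid case-chasing is to first prove a rank-stratified count valid for $n\geqslant 4$ (so that rank $\leqslant 2$ is cleanly separated from rank $\geqslant 3$) and then verify $n=3$ directly, using the noted coincidences $\ODI_3=\POI_3$, $\MDI_3=\PODI_3$, $\OPDI_3=\POPI_3$, whose cardinalities are classical.
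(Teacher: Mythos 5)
Your plan is essentially the paper's proof: it invokes Lemma~\ref{fundlemma}, handles rank $\leqslant 1$ separately, counts the order-preserving (resp.\ orientation-preserving) restrictions of each $g^k$ and $hg^k$ of rank $\geqslant 2$ exactly as you describe (domains in one of two blocks for $g^k$; rank-$2$ pairs straddling the reflection for $hg^k$), and then subtracts the $\B_2$ double-count before summing. One small slip in a parenthetical: for even $n$ the orientation-preserving correction $\frac{1+(-1)^n}{4}n^2=\frac{1}{2}n^2$ equals $|\B_2|$ itself (each antipodal pair is counted once for each of its two dihedral extensions, so the overcount is $|\B_2|$), not $\frac{1}{2}|\B_2|$.
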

\begin{proof}
Let $\A=\{\alpha\in\DI_n\mid |\dom(\alpha)|\leqslant1\}$. Clearly,
$\A=\{\alpha\in\OPDI_n\mid |\dom(\alpha)|\leqslant1\}=\{\alpha\in\ODI_n\mid |\dom(\alpha)|\leqslant1\}$.
It is also clear that $|\A|=1+n^2$. Therefore,
in view of Lemma \ref{fundlemma}, to determine the sizes of $\ODI_n$ and $\OPDI_n$, it suffices to count how many distinct restrictions of permutations of $\D_{2n}$ with rank greater than or equal to $2$ are order-preserving and orientation-preserving, respectively.

\smallskip

First, we determine the cardinality of the set $\B=\{\alpha\in\ODI_n\mid |\dom(\alpha)|\geqslant2\}$. Let $k\in\{0,1,\ldots,n-1\}$.
Clearly, the only order-preserving restrictions of $hg^k$, with rank greater than or equal to $2$,
are of the form $hg^k|_{\{i<j\}}$, with $1\leqslant i\leqslant k$ and $k+1\leqslant j\leqslant n$.
Hence, we have $k\times(n-k)$ order-preserving restrictions of $hg^k$ with rank greater than or equal to $2$.

On the other hand, any order-preserving restriction of $g^k$ has its domain contained in $\{1,\ldots,n-k\}$ or in $\{n-k+1,\ldots,n\}$,
whence $g^k$ has $\sum_{i=2}^{n-k}\binom{n-k}{i}+\sum_{i=2}^{k}\binom{k}{i}$ order-preserving restrictions with rank greater than or equal to $2$.

Observe that, if $n$ is even then
$$
\B_2\cap\B=\left\{
\begin{pmatrix}
i&i+\frac{n}{2}\\
j&j+\frac{n}{2}
\end{pmatrix}
\mid
1\leqslant i,j\leqslant \frac{n}{2}
\right\},
$$
whence we have $|\B_2\cap\B|=(\frac{n}{2})^2=\frac{1}{4}n^2$ elements in $\B$ with exactly two extensions in $\D_{2n}$,
while the remaining elements only have one. Conversely, for an odd $n$, all elements of $\B$ have exactly one extension in $\D_{2n}$.
Thus
$$
|\B|= \left\{\begin{array}{ll} \displaystyle
\sum_{k=0}^{n-1}\left(k\times(n-k)\right) + \sum_{k=0}^{n-1}\left(\sum_{i=2}^{n-k}\binom{n-k}{i}+\sum_{i=2}^{k}\binom{k}{i}\right)
& \mbox{if $n$ is odd}
\\\\ \displaystyle
\sum_{k=0}^{n-1}\left(k\times(n-k)\right) + \sum_{k=0}^{n-1}\left(\sum_{i=2}^{n-k}\binom{n-k}{i}+\sum_{i=2}^{k}\binom{k}{i}\right) - \frac{1}{4}n^2
& \mbox{if $n$ is even.}
\end{array}\right.
$$

Now, since
\begin{align*}
\sum_{k=0}^{n-1}\left(k\times(n-k)\right) = \sum_{k=1}^{n-1}\left(k\times(n-k)\right) =
n\sum_{k=1}^{n-1}k - \sum_{k=1}^{n-1}k^2 = \qquad\qquad\qquad\qquad\\
n\frac{1+(n-1)}{2}(n-1) - \frac{(n-1)n(2(n-1)+1)}{6} = \frac{(n+1)n(n-1)}{6}
\end{align*}
and
\begin{align*}
\sum_{k=0}^{n-1}\left(\sum_{i=2}^{n-k}\binom{n-k}{i}+\sum_{i=2}^{k}\binom{k}{i}\right) =
\sum_{k=0}^{n-1}\left( (2^{n-k}-n+k-1)+(2^k-k-1) \right) =
\sum_{k=1}^{n}2^k + \sum_{k=0}^{n-1}2^k -\sum_{k=0}^{n-1}(n+2) =\\
(2^{n+1}-1-1)+(2^n-1)-n(n+2) =
3\cdot2^n-n^2-2n-3,
\end{align*}
the result about $|\ODI_n|=|\A|+|\B|$ immediately follows.

\smallskip

Next, we determine the cardinality of the set $\C=\{\alpha\in\OPDI_n\mid |\dom(\alpha)|\geqslant2\}$. Let $k\in\{0,1,\ldots,n-1\}$.
The orientation-preserving restrictions of $hg^k$, with rank greater than or equal to $2$, are all its order-preserving restrictions (which as seen above must have rank $2$) together with all its order-reversing restrictions of rank $2$. Hence,
we have $k\times(n-k) + \binom{k}{2} + \binom{n-k}{2}$
orientation-preserving restrictions of $hg^k$ with rank greater than or equal to $2$. Since all restrictions of $g^k$ are orientation-preserving
and, for an even $n$, $\B_2\subseteq\C$ with $|\B_2|=\frac{1}{2}n^2$, we have
$$
|\C|= \left\{\begin{array}{ll} \displaystyle
\sum_{k=0}^{n-1}\left(k\times(n-k)+  \binom{k}{2} + \binom{n-k}{2}\right) + \sum_{k=0}^{n-1}\sum_{i=2}^{n}\binom{n}{i} & \mbox{if $n$ is odd}
\\\\ \displaystyle
\sum_{k=0}^{n-1}\left(k\times(n-k)+  \binom{k}{2} + \binom{n-k}{2}\right) + \sum_{k=0}^{n-1}\sum_{i=2}^{n}\binom{n}{i} - \frac{1}{2}n^2
& \mbox{if $n$ is even.}
\end{array}\right.
$$
Now, from
\begin{align*}
\sum_{k=0}^{n-1}\left(k\times(n-k)+  \binom{k}{2} + \binom{n-k}{2}\right) =
\frac{(n+1)n(n-1)}{6} + 2\sum_{k=0}^{n-1}\binom{k}{2} + \binom{n}{2} = \qquad\qquad\quad\\
\frac{(n+1)n(n-1)}{6} + 2\binom{n}{3} + \binom{n}{2} = \frac{n^2(n-1)}{2}
\end{align*}
and
$$
\sum_{k=0}^{n-1}\sum_{i=2}^{n}\binom{n}{i} = \sum_{k=0}^{n-1}\left(2^n-n-1\right)=n(2^n-n-1) = n2^n-n^2-n,
$$
the result about $|\OPDI_n|=|\A|+|\C|$ also follows.
\end{proof}

Next, we exemplify the previous proof with the calculation of the cardinality of $\ODI_{4}$.

\begin{example}
\rm Let $\A=\{\alpha\in\ODI_4\mid |\dom(\alpha)|\leqslant1\}$.
It is clear that $\emptyset \in \A$ and if $\alpha \in \A$ with $|\dom(\alpha)|=1$ then $\alpha \in \left\{
\trans{i \\ j}
 \mid 1 \leqslant i, j \leqslant 4\right\}$.
So, we have $|\A|=1+4^2=17$.

Therefore, in view of Lemma \ref{fundlemma}, to determine $|\ODI_4|$, it suffices to count how many distinct restrictions of permutations of
$\D_{2\cdot4}=\{\id,g,g^{2},g^{3},h,hg,hg^{2},hg^{3}\}$, with rank greater than or equal to $2$, are order-preserving.

\smallskip

Let $\B=\{\alpha\in\ODI_4 \mid |\dom(\alpha)|\geqslant2\}$.
Clearly, for $k\in\{0,1,2,3\}$, the only order-preserving restrictions of $hg^k$, with rank greater than or equal to $2$,
are of the form $hg^k|_{\{i<j\}}$, with $1\leqslant i\leqslant k$ and $k+1\leqslant j\leqslant 4$.
Recall that
$g=\trans{
1&2&3&4\\
2&3&4&1
}$ and $h=\trans{
1&2&3&4\\
4&3&2&1
}$.
Let $\alpha = hg^k|_{\{i<j\}} \in \B$. Then
\begin{multline*}
\alpha \in \left\{
\trans{
1&2\\
1&4
}=hg|_{\{1,2\}},
\trans{
1&3\\
1&3
}=hg|_{\{1,3\}},
\trans{
1&4\\
1&2
}=hg|_{\{1,4\}},
\trans{
1&3\\
2&4
}=hg^{2}|_{\{1,3\}},
\trans{
1&4\\
2&3
}=hg^{2}|_{\{1,4\}},  \right. \\ \left.
\trans{
2&3\\
1&4
}=hg^{2}|_{\{2,3\}},
\trans{
2&4\\
1&3
}=hg^{2}|_{\{2,4\}},
\trans{
1&4\\
3&4
}=hg^{3}|_{\{1,4\}},
\trans{
2&4\\
2&4
}=hg^{3}|_{\{2,4\}},
\trans{
3&4\\
1&4
}=hg^{3}|_{\{3,4\}}\right\}.
\end{multline*}
Hence, we have $k\times(4-k)$ order-preserving restrictions of $hg^k$ with rank greater than or equal to $2$. Altogether, we have $(1\times3)+(2\times2)+(3\times1) = 10$ such order-preserving transformations.

On the other hand,  for $k\in\{0,1,2,3\}$, any order-preserving restriction of $g^k$ has its domain contained in $\{1,\ldots,4-k\}$ or in $\{5-k,\ldots,4\}$.
Therefore,
for $\alpha = g^k|_{\dom(\alpha)} \in \B$ with $\dom(\alpha)\subseteq \{1,\ldots,4-k\}$ or $\dom(\alpha) \subseteq \{5-k,\ldots,4\}$, we have
\begin{multline*}
\alpha \in \left\{
\trans{
1&2&3&4\\
1&2&3&4
}=\id|_{\{1,2,3,4\}},
\trans{
1&2&3\\
1&2&3
}=\id|_{\{1,2,3\}},
\trans{
1&2&4\\
1&2&4
}=\id|_{\{1,2,4\}},
\trans{
1&3&4\\
1&3&4
}=\id|_{\{1,3,4\}},  \right. \\ \left.
\trans{
2&3&4\\
2&3&4
}=\id|_{\{2,3,4\}},
\trans{
1&2\\
1&2
}=\id|_{\{1,2\}},
\trans{
1&3\\
1&3
}=\id|_{\{1,3\}},
\trans{
1&4\\
1&4
}=\id|_{\{1,4\}},
\trans{
2&3\\
2&3
}=\id|_{\{2,3\}},
\trans{
2&4\\
2&4
}=\id|_{\{2,4\}},  \right. \\ \left.
\trans{
3&4\\
3&4
}=\id|_{\{3,4\}},
\trans{
1&2&3\\
2&3&4
}=g|_{\{1,2,3\}},
\trans{
1&2\\
2&3
}=g|_{\{1,2\}},
\trans{
1&3\\
2&4
}=g|_{\{1,3\}},
\trans{
2&3\\
3&4
}=g|_{\{2,3\}},
\trans{
1&2\\
3&4
}=g^{2}|_{\{1,2\}},  \right. \\ \left.
\trans{
3&4\\
1&2
}=g^{2}|_{\{3,4\}},
\trans{
2&3&4\\
1&2&3
}=g^{3}|_{\{2,3,4\}},
\trans{
2&3\\
1&2
}=g^{3}|_{\{2,3\}},
\trans{
2&4\\
1&3
}=g^{3}|_{\{2,4\}},
\trans{
3&4\\
2&3
}=g^{3}|_{\{3,4\}}\right\}.
\end{multline*}
Hence, $g^k$ has $\sum_{i=2}^{4-k}\binom{4-k}{i}+\sum_{i=2}^{k}\binom{k}{i} = 16+5=21$ order-preserving restrictions with rank greater than or equal to $2$.

Observe that, if $\alpha \in \B_2=\{\alpha\in\ODI_4\mid |\dom(\alpha)|=2 \textrm{ and } \d(\min \dom(\alpha),\max \dom(\alpha))=2\} \subseteq \B$ then
\begin{multline*}
\alpha \in \left\{
\trans{
1&3\\
1&3
}=\id|_{\{1,3\}}=hg|_{\{1,3\}},
\trans{
1&3\\
2&4
}=g|_{\{1,3\}}=hg^{2}|_{\{1,3\}},
\trans{
2&4\\
2&4
}=\id|_{\{2,4\}}=hg^{3}|_{\{2,4\}}, \right. \\ \left.
\trans{
2&4\\
1&3
}=g^{3}|_{\{2,4\}}=hg^{2}|_{\{2,4\}}
\right\},
\end{multline*}
whence we have $|\B_2\cap\B|=4$ elements in $\B$ with exactly two extensions in $\D_{2\cdot4}$.
The remaining elements only have one extension in $\D_{2\cdot4}$. Therefore, we have $|\B|=10+21-4=27$.

Thus, we obtain $|\ODI_{4}|=|\A|+|\B|=17+27=44$.
\end{example}

\smallskip

The previous approach could also be applied to count the elements of $\MDI_n$.
However, since all $n^2+1$ elements of $\MDI_n$ with rank less than or equal to $1$ are order-preserving and  the mapping
$$
\begin{array}{ccc}
\{\alpha\in\ODI_n\mid |\im(\alpha)|\geqslant2\}&\longrightarrow&\{\alpha\in\MDI_n\setminus\ODI_n\mid |\im(\alpha)|\geqslant2\}\\
\alpha & \longmapsto & \alpha h
\end{array}
$$
is a bijection (notice $\alpha=\alpha h^2$, for all $\alpha\in\PT_n$), then $|\MDI_n|=2|\ODI_n|-n^2-1$.
Hence, as an immediate consequence of Theorem \ref{sizeopcoc}, we have the following result.

\begin{theorem}\label{sizemc}
One has
$$
|\MDI_n|=
3\cdot2^{n+1}+\frac{(n+1)n(n-1)}{3} -\frac{5+(-1)^n}{4}n^2 -4n-5.
$$
\end{theorem}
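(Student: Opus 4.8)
The plan is to reduce the count to that of $|\ODI_n|$, already obtained in Theorem~\ref{sizeopcoc}, by exploiting the involution $h$. First I would write $\MDI_n$ as the disjoint union of $\ODI_n$ and $\MDI_n\setminus\ODI_n$, the latter consisting precisely of the order-reversing, not order-preserving, partial isometries of $C_n$. Since any partial transformation of rank at most $1$ is simultaneously order-preserving and order-reversing, every element of $\MDI_n\setminus\ODI_n$ has rank at least $2$; moreover, as recalled in the proof of Theorem~\ref{sizeopcoc}, $\ODI_n$ has exactly $n^2+1$ elements of rank at most $1$.

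Next I would check that $\alpha\mapsto\alpha h$ restricts to a bijection from $\{\alpha\in\ODI_n\mid \rank(\alpha)\geqslant2\}$ onto $\MDI_n\setminus\ODI_n$. It is well defined: $h\in\DI_n$ is order-reversing and, since $\DI_n$ is a monoid, $\alpha h\in\DI_n$ whenever $\alpha\in\DI_n$; also $\dom(\alpha h)=\dom(\alpha)$ because $h$ is a permutation of $\Omega_n$, so $\rank(\alpha h)=\rank(\alpha)$. If $\alpha\in\ODI_n$ then $\alpha h$ is order-reversing (product of an order-preserving by an order-reversing transformation), and if in addition $\rank(\alpha)\geqslant2$ then $\alpha h$ cannot be order-preserving, whence $\alpha h\in\MDI_n\setminus\ODI_n$. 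Conversely, from $h^2=\id$ we get $\alpha=(\alpha h)h$, which gives injectivity; and given any $\beta\in\MDI_n\setminus\ODI_n$, the transformation $\beta h$ is order-preserving of rank $\rank(\beta)\geqslant2$, so $\beta=(\beta h)h$ lies in the image, which gives surjectivity.

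Consequently $|\MDI_n\setminus\ODI_n|=|\ODI_n|-(n^2+1)$, so that $|\MDI_n|=2|\ODI_n|-n^2-1$. Substituting the expression for $|\ODI_n|$ from Theorem~\ref{sizeopcoc} and using $-\frac{1+(-1)^n}{4}n^2-n^2=-\frac{5+(-1)^n}{4}n^2$ yields the stated formula. There is essentially no hard step here; the only point requiring care is verifying that multiplication by $h$ behaves correctly with respect to both membership in $\DI_n$ and the monotonicity dichotomy, so that it really carries the order-preserving isometries of rank $\geqslant2$ bijectively onto the non-order-preserving monotone ones.
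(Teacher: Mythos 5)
Your proposal is correct and follows essentially the same route as the paper: both use the bijection $\alpha\mapsto\alpha h$ between the order-preserving partial isometries of rank at least $2$ and $\MDI_n\setminus\ODI_n$, together with the fact that all $n^2+1$ elements of rank at most $1$ are order-preserving, to get $|\MDI_n|=2|\ODI_n|-n^2-1$ and then substitute Theorem \ref{sizeopcoc}. Your write-up is in fact somewhat more detailed than the paper's in verifying that the map is well defined and bijective.
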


\section{Green's relation $\mathscr{J}$}\label{greens}

The main result of this section is the description of the $\mathcal{J}$-relation for each of the monoids $\ODI_{n}$, $\MDI_{n}$ and $\OPDI_{n}$.

Given an inverse submonoid $M$ of $\I_n$,
it is well known that Green's relations $\mathscr{L}$, $\mathscr{R}$ and $\mathscr{H}$ of $M$ can be described as following:
for $\alpha, \beta \in M$,
\begin{itemize}
\item $\alpha \mathscr{L} \beta$ if and only if $\im(\alpha) = \im(\beta)$,

\item $\alpha \mathscr{R} \beta$ if and only if $\dom(\alpha) = \dom(\beta)$, and

\item $\alpha \mathscr{H} \beta$ if and only if $\im(\alpha) = \im(\beta)$ and $\dom(\alpha) = \dom(\beta)$.
\end{itemize}
In $\I_n$ we also have
\begin{itemize}
\item $\alpha \mathscr{J} \beta$ if and only if $|\dom(\alpha)| = |\dom(\beta)|$ (if and only if $|\im(\alpha)| = |\im(\beta)|$).
\end{itemize}

Observe that for a finite monoid, we always have $\mathscr{J} = \mathscr{D} (= \mathscr{L}\circ\mathscr{R} = \mathscr{R}\circ\mathscr{L})$.

Since the monoids $\ODI_n$, $\MDI_n$, and $\OPDI_n$ are inverse submonoids of $\I_n$, our main objective in this section is to give a description of Green's relation $\mathscr{J}$ for these monoids.\smallskip

To make the discussion of Green's relation $\mathscr{J}$ clearer and easier to follow, we divided it into lemmas. Lemma \ref{Grpr0} is a
 characterization of $\DI_n$, presented in \cite[Proposition 4.1.12, pages 67-81]{Paulista:2022}. It will be useful for the proof of Lemma \ref{relJ}.
Then we apply Lemma \ref{relJ} in the proof of Theorem \ref{greenJ}.

We have provided a simplified and concise proof for Lemma \ref{Grpr0}, as the original proof was complex and lengthy.

\begin{lemma}\label{Grpr0}
Let $\alpha \in \PORI_n$ be such that $\dom(\alpha) = \{i_1<i_2<\cdots<i_k\}$ with $k \in \{2,3,\ldots,n\}$.
Then $\alpha \in \DI_n$ if and only if $\d(i_1,i_k) = \d(i_1\alpha,i_k\alpha)$
and $\d(i_p,i_{p+1}) = \d(i_p\alpha,i_{p+1}\alpha)$ for $p=1,2,\ldots,k-1$.
\end{lemma}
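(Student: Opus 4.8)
The statement to prove is Lemma~\ref{Grpr0}: for $\alpha \in \PORI_n$ with domain $\{i_1 < \cdots < i_k\}$, $k \geqslant 2$, one has $\alpha \in \DI_n$ if and only if $\d(i_1,i_k) = \d(i_1\alpha,i_k\alpha)$ and $\d(i_p,i_{p+1}) = \d(i_p\alpha,i_{p+1}\alpha)$ for $p = 1,\ldots,k-1$. The necessity is immediate, since every element of $\DI_n$ is a partial isometry of $C_n$ and hence preserves \emph{all} distances, in particular the $k$ distances listed. The whole content is the sufficiency, and the plan is to produce an explicit permutation $\sigma \in \D_{2n}$ such that $\alpha = \sigma|_{\dom(\alpha)}$; by Lemma~\ref{fundlemma} this is equivalent to $\alpha \in \DI_n$.

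The key geometric idea is that the $k$ ``consecutive'' distances $\d(i_1,i_2), \d(i_2,i_3), \ldots, \d(i_{k-1},i_k), \d(i_k,i_1)$ are the lengths of the $k$ arcs into which the points $i_1,\ldots,i_k$ cut the cycle $C_n$; these arc-lengths sum to $n$. I would first argue that, because $\alpha$ is oriented (say orientation-preserving — the orientation-reversing case is handled by composing with $h$, using that $h \in \DI_n$ and $\DI_n$ is closed under composition, or symmetrically), the image sequence $(i_1\alpha,\ldots,i_k\alpha)$ is cyclic, so going around the cycle $C_n$ the points $i_1\alpha,\ldots,i_k\alpha$ are encountered in this cyclic order and again cut $C_n$ into $k$ arcs. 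The hypothesis says the $k$ consecutive source arc-lengths equal the $k$ consecutive image arc-lengths \emph{in the same cyclic order} (here I need the hypothesis on $\d(i_1,i_k)$ together with the fact that $\sum_p \d(i_p,i_{p+1}) = n = \sum_p \d(i_p\alpha, i_{p+1}\alpha)$, which lets one deduce that the ``wrap-around'' arc from $i_k$ back to $i_1$ also has matching length and that no arc exceeds $n/2$ in a way that would spoil the bookkeeping). Consequently the rotation $g^t$ carrying $i_1$ to $i_1\alpha$ carries each $i_p$ to $i_p\alpha$: one checks this inductively on $p$, since matching consecutive arc-lengths and matching cyclic order force $i_{p+1}g^t = i_{p+1}\alpha$ once $i_p g^t = i_p\alpha$. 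This $g^t$ (or $hg^t$ in the orientation-reversing case) is the desired $\sigma$.

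In more detail, the induction step is where care is needed: knowing $i_p g^t = i_p\alpha$ and $\d(i_p, i_{p+1}) = \d(i_p\alpha, i_{p+1}\alpha) = \d(i_p g^t, i_{p+1}g^t)$, I must conclude $i_{p+1} g^t = i_{p+1}\alpha$ rather than the ``reflected'' alternative at the same distance. This is exactly where orientation is used: both $i_{p+1}$ (relative to $i_p$, moving in the positive cyclic direction) and $i_{p+1}\alpha$ (relative to $i_p\alpha$, moving in the positive cyclic direction, by cyclicity of the image sequence) are the \emph{first} listed point after the current one going positively around $C_n$, and a rotation preserves the positive cyclic direction, so the positively-measured gap of length $\d(i_p,i_{p+1})$ lands $i_{p+1}g^t$ on $i_{p+1}\alpha$. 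One subtlety to dispatch: if $n$ is even and some arc has length exactly $n/2$, the two directions coincide and there is genuinely no choice, so the argument is unaffected; if $k = 2$ the ``cyclic order'' statement is vacuous and one uses directly that $\d(i_1,i_2) = \d(i_1\alpha, i_2\alpha)$ determines $i_2\alpha$ from $i_1\alpha$ up to two possibilities, both of which are realized by elements of $\D_{2n}$ fixing the constraint, matching the two-extension phenomenon of Lemma~\ref{fundlemma}(1).

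The main obstacle I anticipate is the careful passage from ``the $k$ \emph{consecutive} distances match'' (a local, arc-by-arc condition) to ``$\alpha$ is the restriction of a single global rotation or reflection'' — i.e. making rigorous that matching consecutive arc-lengths in matching cyclic order forces the global isometry, and in particular correctly handling the closure arc $\d(i_k,i_1)$ via the total-length-$n$ identity and ruling out the orientation-reversed alternative at each step. The orientation hypothesis on $\alpha$ is doing essential work here and must be invoked precisely; I would isolate the orientation-preserving case, build $\sigma = g^t$ with $t \equiv i_1\alpha - i_1 \pmod n$, verify $i_p \sigma = i_p \alpha$ for all $p$ by the induction just described, and then reduce the orientation-reversing case to it by noting $\alpha h$ is orientation-preserving, lies in $\PORI_n$, still satisfies the distance hypotheses (since $h$ is an isometry of $C_n$), hence $\alpha h = \tau|_{\dom(\alpha)}$ for some $\tau \in \D_{2n}$, and therefore $\alpha = (\tau h)|_{\dom(\alpha)}$ with $\tau h \in \D_{2n}$.
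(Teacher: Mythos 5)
Your overall architecture is sound and close in spirit to the paper's: necessity is immediate, the orientation-reversing case reduces to the orientation-preserving one by composing with $h$, and in the orientation-preserving case (with $k\geqslant 3$) the goal is to exhibit a single rotation $g^t$ restricting to $\alpha$. The problem is that the step you yourself flag as the crux is not actually carried out, and the two facts you lean on there are false. First, the consecutive \emph{distances} do not sum to $n$: for $n=10$ and $\dom(\alpha)=\{1,2,3\}$ one has $\d(1,2)+\d(2,3)+\d(3,1)=1+1+2=4$. What sums to $n$ are the positively directed gaps $a_p=i_{p+1}-i_p$ (for $p<k$) and $a_k=n-i_k+i_1$, while $\d(i_p,i_{p+1})=\min\{a_p,n-a_p\}$; in particular an arc certainly can exceed $n/2$ (the arc from $3$ back to $1$ above has length $8$), contrary to your assertion that none does. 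Second, and more seriously, your induction step conflates the distance with the directed gap. Writing $b_p$ for the positive gap from $i_p\alpha$ to $i_{p+1}\alpha$, cyclicity of the image sequence gives $\sum_p b_p=n$ and the hypothesis gives only $b_p\in\{a_p,\,n-a_p\}$; knowing that $i_{p+1}\alpha$ is the next image point in the positive direction does not exclude $b_p=n-a_p$. Ruling out these ``reflected'' gaps is the entire content of the lemma, and your phrase ``the positively-measured gap of length $\d(i_p,i_{p+1})$ lands $i_{p+1}g^t$ on $i_{p+1}\alpha$'' assumes exactly what is to be proved.

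A correct completion does exist along your lines, and it is where all the work lies. One can argue as the paper does: by contradiction, locate the at most one index with $a_p>n/2$ and the at most one index with $b_p>n/2$, and use the closure hypothesis $\d(i_1,i_k)=\d(i_1\alpha,i_k\alpha)$ to derive incompatible identities (this takes a full page of case analysis). Alternatively, your sum-to-$n$ idea can be salvaged if applied to the directed gaps: setting $S=\{p\mid b_p\neq a_p\}$, the equalities $\sum_p a_p=\sum_p b_p=n$ give $|S|\,n=2\sum_{p\in S}a_p\leqslant 2\bigl(n-(k-|S|)\bigr)$, which for $k\geqslant 3$ forces $|S|\leqslant 1$, and $|S|=1$ would force $a_p=n/2=n-a_p$ for the exceptional index, contradicting $p\in S$; hence $S=\emptyset$ and $\alpha=g^t|_{\dom(\alpha)}$ with $t\equiv i_1\alpha-i_1 \pmod n$. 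Without some such argument the proposal is a correct plan rather than a proof.
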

\begin{proof}
If $\alpha \in \DI_n$ then, by definition, we have $\d(i_1,i_k) = \d(i_1\alpha,i_k\alpha)$
and $\d(i_p,i_{p+1}) = \d(i_p\alpha,i_{p+1}\alpha)$ for $p=1,2,\ldots,k-1$.

\smallskip

Conversely, suppose that  $\d(i_1,i_k) = \d(i_1\alpha,i_k\alpha)$
and $\d(i_p,i_{p+1}) = \d(i_p\alpha,i_{p+1}\alpha)$ for $p=1,2,\ldots,k-1$. Clearly, if $k=2$ or $k=3$, the result is trivial. So, we may assume that $k\geqslant3$.

If $\alpha\in\POPI_n$ then, by \cite[Proposition 3.1]{Fernandes:2000},
there exists $i\in\{0,1,\ldots,n-1\}$ and $\beta\in\POI_n$ such that $\alpha=g^i\beta$.
On the other hand, if $\alpha\not\in\POPI_n$ then $h\alpha\in\POPI_n$, whence
there exists also $i\in\{0,1,\ldots,n-1\}$ and $\beta\in\POI_n$ such that $h\alpha=g^i\beta$ and so $\alpha=hg^i\beta$.
Thus, in either case, there exist $i\in\{0,1,\ldots,n-1\}$, $j\in\{0,1\}$ and $\beta\in\POI_n$ such that $\alpha=h^jg^i\beta$.
Observe that we also have $\beta=g^{n-i}h^j\alpha$.

Suppose that $\dom(\beta) = \{i'_1<i'_2<\cdots<i'_k\}$ and let $t\in\{0,1,\ldots,k\}$ be such that $i'_t\leqslant i$ and $i'_{t+1}\geqslant i+1$
(with the obvious meaning for $t=0$ and $t=k$). Then
$$
(i_1,i_2,\ldots,i_k)=\left\{
\begin{array}{ll}
(i'_{t+1}g^{n-i},\ldots,i'_kg^{n-i},i'_1g^{n-i},\ldots,i'_tg^{n-i}) & \mbox{if $j=0$}\\ \\
(i'_tg^{n-i}h,\ldots,i'_1g^{n-i}h,i'_kg^{n-i}h,\ldots,i'_{t+1}g^{n-i}h) & \mbox{if $j=1$},
\end{array}
\right.
$$
from which it is a routine matter to show that $\d(i'_1,i'_k) = \d(i'_1\beta,i'_k\beta)$
and $\d(i'_p,i'_{p+1}) = \d(i'_p\beta,i'_{p+1}\beta)$ for $p=1,2,\ldots,k-1$, since $g,h\in\DI_n$.

Therefore, we may reduce our proof to order-preserving transformations and may assume that $\alpha\in\POI_n$.
Let $j_p=i_p\alpha$ for $p=1,2,\ldots,k$. Then $j_1<j_2<\cdots<j_k$.

\smallskip

First, we show that $i_{p+1}-i_p=j_{p+1}-j_p$ for $p=1,2,\ldots,k-1$.
Observe that $\sum_{p=1}^{k-1}(i_{p+1}-i_p)=i_k-i_1<n$ and so there exists at most one index
$r\in\{1,2,\ldots,k-1\}$ such that $i_{r+1}-i_r\geqslant\frac{n}{2}$. Similarly, there exists at most one index
$s\in\{1,2,\ldots,k-1\}$ such that $j_{s+1}-j_s\geqslant\frac{n}{2}$.
Also notice that for all $p\in\{1,2,\ldots,k-1\}$,
$i_{p+1}-i_p=\frac{n}{2}$ or $j_{p+1}-j_p=\frac{n}{2}$ implies that $i_{p+1}-i_p=\frac{n}{2}=j_{p+1}-j_p$.

In order to obtain a contradiction, suppose there exists $\ell\in\{1,2,\ldots,k-1\}$ such that $i_{\ell+1}-i_\ell\neq j_{\ell+1}-j_\ell$.
Let $r$ be the smallest of such indices. Since $\d(i_r,i_{r+1})=\d(j_r,j_{r+1})$, we get
$$
\d(i_r,i_{r+1})=i_{r+1}-i_r=n-j_{r+1}+j_r\quad\text{or}\quad \d(i_r,i_{r+1})=j_{r+1}-j_r=n-i_{r+1}+i_r.
$$
By considering $\alpha^{-1}$ instead of $\alpha$, we may assume, without loss of generality that  $\d(i_r,i_{r+1})=j_{r+1}-j_r$.
Hence $i_{r+1}-i_r>\frac{n}{2}$ and $j_{r+1}-j_r<\frac{n}{2}$.
Moreover, $r$ is the only index in $\{1,2,\ldots,k-1\}$ such that $i_{r+1}-i_r\geqslant\frac{n}{2}$.

\smallskip

We begin by assuming that $j_{p+1}-j_p<\frac{n}{2}$ for all $p=1,2,\ldots,k-1$.
Since $i_{p+1}-i_p<\frac{n}{2}$ for all $p\in\{1,2,\ldots,k-1\}\setminus\{r\}$, then
\begin{align*}
& \d(i_p,i_{p+1})=\d(j_p,j_{p+1}),~\mbox{for $p=1,2,\ldots,k-1$}
\quad \Longrightarrow \quad \sum_{p=1}^{k-1}\d(i_p,i_{p+1})=\sum_{p=1}^{k-1}\d(j_p,j_{p+1}) \\
 \Longrightarrow\quad &
 \sum_{p=1}^{r-1}(i_{p+1}-i_p) + (n-i_{r+1}+i_r) +  \sum_{p=r+1}^{k-1}(i_{p+1}-i_p) = \sum_{p=1}^{k-1}(j_{p+1}-j_p) \\
 \Longrightarrow\quad &
 (i_r-i_1) + (n-i_{r+1}+i_r) + (i_k-i_{r+1}) = j_k-j_1 \\
  \Longrightarrow\quad &
 (i_k-i_1) + (n-i_{r+1}+i_r) + (i_r-i_{r+1}) = j_k-j_1.
\end{align*}
On the other hand, as $\d(j_1,j_k)=\d(i_1,i_k)$ then $j_k-j_1=i_k-i_1$ or $j_k-j_1=n-i_k+i_1$.
If $j_k-j_1=i_k-i_1$ then $n-i_{r+1}+i_r=i_{r+1}-i_r>\frac{n}{2}$, which is a contradiction.
Thus $j_k-j_1=n-i_k+i_1$, whence $2(i_k-i_1+i_r-i_{r+1})=0$ and so $i_k-i_1=i_{r+1}-i_r$, which is again a contradiction (since $k\geqslant3$).

\smallskip

Therefore, there exists $s\in\{1,2,\ldots,k-1\}$ such that $j_{s+1}-j_s\geqslant\frac{n}{2}$, which is the only index under these conditions.
Moreover, $j_{s+1}-j_s>\frac{n}{2}$ and $s>r$.
Then, we have
\begin{align*}
& \d(i_p,i_{p+1})=\d(j_p,j_{p+1}),~\mbox{for $p=1,2,\ldots,k-1$}
\quad \Longrightarrow \quad \sum_{p=1}^{k-1}\d(i_p,i_{p+1})=\sum_{p=1}^{k-1}\d(j_p,j_{p+1}) \\
 \Longrightarrow\quad &
 \sum_{p=1}^{r-1}(i_{p+1}-i_p) + (n-i_{r+1}+i_r) +  \sum_{p=r+1}^{k-1}(i_{p+1}-i_p) =
\sum_{p=1}^{s-1}(j_{p+1}-j_p) + (n-j_{s+1}+j_s) +  \sum_{p=s+1}^{k-1}(j_{p+1}-j_p)   \\
 \Longrightarrow\quad &
 (i_r-i_1) + (n-i_{r+1}+i_r) + (i_k-i_{r+1}) = (j_s-j_1) + (n-j_{s+1}+j_s) + (j_k-j_{s+1}) \\
  \Longrightarrow\quad &
 (n+i_k-i_1) + 2(i_r-i_{r+1}) = (n+j_k-j_1) + 2(j_s-j_{s+1}) .
\end{align*}
Next, as $i_k-i_1\geqslant i_{r+1}-i_r>\frac{n}{2}$ and $j_k-j_1\geqslant j_{s+1}-j_s>\frac{n}{2}$, we have
$$
n-i_k+i_1=\d(i_1,i_k)=\d(j_1,j_k)=n-j_k+j_1
$$
and so $i_{r+1}-i_r=j_{s+1}-j_s$.
On the other hand, since $i_{s+1}-i_s<\frac{n}{2}$ and $j_{s+1}-j_s>\frac{n}{2}$, we have
$$
i_{s+1}-i_s=\d(i_s,i_{s+1})=\d(j_s,j_{s+1})=n-j_{s+1}+j_s,
$$
whence $i_{r+1}-i_r=n-i_{s+1}+i_s$ and so
$
n-1\geqslant i_{s+1}-i_r=n+i_s-i_{r+1}\geqslant n,
$
which is once again a contradiction.

\smallskip

Thus, we proved that $i_{p+1}-i_p=j_{p+1}-j_p$, for all $p\in\{1,2,\ldots,k-1\}$.

\smallskip

Now, let $1\leqslant p<q\leqslant k$. Then, we have
$
i_q-i_p=\sum_{t=p}^{q-1}(i_{t+1}-i_t)=\sum_{t=p}^{q-1}(j_{t+1}-j_t)=j_q-j_p,
$
from which follows also that $n-i_q+i_p=n-j_q+j_p$. Hence
$$
\d(i_p,i_q)=\left\{
\begin{array}{ll}
i_q-i_p &\mbox{if $i_q-i_p\leqslant\frac{n}{2}$}\\
n-i_q+i_p &\mbox{if $i_q-i_p>\frac{n}{2}$}
\end{array}
\right.
=\left\{
\begin{array}{ll}
j_q-j_p &\mbox{if $j_q-j_p\leqslant\frac{n}{2}$}\\
n-j_q+j_p &\mbox{if $j_q-j_p>\frac{n}{2}$}
\end{array}
\right.
=\d(j_p,j_q).
$$
Thus $\alpha\in\DI_n$, as required.
\end{proof}

Recall that $\id$ denotes the identity transformation on $\Omega_n$.
For $X\subseteq\Omega_n$, we denote by $\id_X$ the \emph{partial identity} with domain $X$, i.e. the restriction $\id|_X$ of the transformation $\id$ to the set $X$.

Now, for $A = \{i_1< i_2< \cdots< i_k\}\subseteq\Omega_n$ with $2\leqslant k \leqslant n$, define
$$
\d(A) = (d_1, d_2, \ldots, d_k),
$$
with $d_p = \d(i_p,i_{p+1})$, for $p = 1,\ldots,k-1$,
and $d_k = \d(i_1,i_k)$.
Take also $B = \{j_1< j_2< \cdots< j_k\}\subseteq\Omega_n$
and define $\delta_{A,B}$ as the only order-preserving transformation from $A$ onto $B$,
i.e.
$$
\delta_{A,B} = \left(
             \begin{array}{cccc}
               i_1 & i_2 & \cdots & i_k \\
               j_1 & j_2 & \cdots & j_k \\
             \end{array}
\right).
$$
Then, we have:

\begin{lemma}\label{relJ}
Let  $A = \{i_1< i_2< \cdots< i_k\}\subseteq\Omega_n$ and  $B = \{j_1< j_2< \cdots< j_k\}\subseteq\Omega_n$ with $2\leqslant k \leqslant n$. Then:
\begin{enumerate}
\item $\d(A)=\d(B)$ if and only if there exists an order-preserving partial isometry from $A$ onto $B$ (i.e. if and only if $\delta_{A,B}\in\ODI_n$);
\item $\d(A)=\d(Bh)$ if and only if there exists an order-reversing partial isometry from $A$ onto $B$;
\item $\d(A)=\d(Bg^{-s})$ for some $0\leqslant s\leqslant n-1$ if and only if there exists an orientation-preserving partial isometry from $A$ onto $B$.
\end{enumerate}
\end{lemma}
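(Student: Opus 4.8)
The plan is to deduce all three equivalences from Lemma~\ref{Grpr0}, used only in the order-preserving case, together with three standard facts recalled in Section~\ref{presection}: that $g,h\in\DI_n$, that $\DI_n$ is closed under composition and under restriction, and the behaviour of the order/orientation (reversing) properties under composition.

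I would start with item~(1). The transformation $\delta_{A,B}$ is the \emph{only} order-preserving bijection of $A$ onto $B$ (an order-preserving bijection between two sorted $k$-sets must send the $p$-th element to the $p$-th element), so ``there exists an order-preserving partial isometry of $A$ onto $B$'' means exactly ``$\delta_{A,B}$ is a partial isometry'', i.e. $\delta_{A,B}\in\DI_n$; and since $\delta_{A,B}\in\POI_n$ always, this is equivalent to $\delta_{A,B}\in\ODI_n$. Now $\delta_{A,B}\in\POI_n\subseteq\PORI_n$, its domain is $A$, and it maps $i_p\mapsto j_p$; hence the hypotheses of Lemma~\ref{Grpr0} for $\delta_{A,B}$, namely $\d(i_1,i_k)=\d(j_1,j_k)$ and $\d(i_p,i_{p+1})=\d(j_p,j_{p+1})$ for $p=1,\dots,k-1$, are literally the coordinate-wise equalities $\d(A)=\d(B)$. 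So Lemma~\ref{Grpr0} gives $\delta_{A,B}\in\DI_n\iff\d(A)=\d(B)$, which is (1). (For the implication ``a partial isometry exists $\Rightarrow\d(A)=\d(B)$'' one can argue directly, without Lemma~\ref{Grpr0}, since being a partial isometry already forces those distance equalities.)

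Items~(2) and~(3) I would obtain from (1) by post-composing with $h$ and with powers of $g$. Forward implications: if $\d(A)=\d(Bh)$ then by (1) $\delta_{A,Bh}$ is an order-preserving partial isometry of $A$ onto $Bh$, and $\delta_{A,Bh}\,(h|_{Bh})$ is a partial isometry (a product of elements of $\DI_n$) of $A$ onto $Bh\cdot h=B$ that is order-reversing, being the product of an order-preserving by an order-reversing transformation; likewise, if $\d(A)=\d(Bg^{-s})$ then $\delta_{A,Bg^{-s}}\,(g^s|_{Bg^{-s}})$ is a partial isometry of $A$ onto $B$ which is orientation-preserving, being a product of orientation-preserving transformations (the rotation $g$ is clearly orientation-preserving). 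Backward implication of (2): from an order-reversing partial isometry $\gamma$ of $A$ onto $B$ we get that $\gamma(h|_B)$ is an order-preserving partial isometry of $A$ onto $Bh$ (product of two order-reversing transformations, and a product of elements of $\DI_n$), so $\d(A)=\d(Bh)$ by (1). Here the only fiddly point is to track domains and images of these composites so that surjectivity onto $B$, $Bh$ and $Bg^{-s}$ is genuinely preserved.

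The real content is the backward implication of (3): from an orientation-preserving partial isometry $\gamma$ of $A$ onto $B$ I must produce $s$ with $\d(A)=\d(Bg^{-s})$. If $k=2$ this is immediate: any bijection between $2$-element sets is orientation-preserving, $\gamma$ being a partial isometry forces $\d(i_1,i_2)=\d(j_1,j_2)$, and $\d(Bg^{-s})=\d(B)$ for every $s$ since $g^{-s}$ is an isometry, so $s=0$ works. If $k\geqslant3$, I would use Lemma~\ref{fundlemma} to fix the unique $\sigma\in\D_{2n}$ with $\gamma=\sigma|_A$, and rule out $\sigma$ being a reflection $hg^t$: otherwise $\gamma=\id_A\,(hg^t)$ would be orientation-reversing, being the product of the order-preserving $\id_A$ by the orientation-reversing $hg^t$, so $(i_1\gamma,\dots,i_k\gamma)$ would be simultaneously cyclic and anti-cyclic, which is impossible for $k\geqslant3$ distinct values (a cyclic sequence of length $k$ with pairwise distinct terms has at most one descent and an anti-cyclic one at most one ascent, forcing $k\leqslant2$). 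Hence $\sigma=g^s$ for some $s\in\{0,\dots,n-1\}$, so $B=A\gamma=Ag^s$, i.e. $A=Bg^{-s}$, and in particular $\d(A)=\d(Bg^{-s})$. The cyclic-versus-anti-cyclic dichotomy for $k\geqslant3$ and the separate $k=2$ case are the steps I expect to require the most care.
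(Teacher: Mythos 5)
Your proposal is correct and follows essentially the same route as the paper: item~(1) via Lemma~\ref{Grpr0} applied to $\delta_{A,B}$, items~(2) and~(3) by composing with $h|_{Bh}$ (resp.\ $h|_B$) and $g^{s}|_{Bg^{-s}}$, and for the backward direction of~(3) the same split into $k=2$ and $k\geqslant 3$, where the reflection case is excluded because a rank-$\geqslant 3$ restriction of $hg^t$ cannot be orientation-preserving. Your explicit count of ascents/descents to justify that exclusion is just a spelled-out version of the observation the paper cites from the proof of Theorem~\ref{sizeopcoc}.
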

\begin{proof}
To prove 1, first suppose that $\d(A)=\d(B)$. Then, we have, for $1\leqslant p\leqslant k-1$,
$\d(i_p,i_{p+1}) = \d(j_p,j_{p+1})=\d(i_p\delta_{A,B},i_{p+1}\delta_{A,B})$ and
$\d(i_1,i_k)=\d(j_1,j_k)=\d(i_1\delta_{A,B},i_k\delta_{A,B})$, whence $\delta_{A,B}\in\DI_n$,
by Lemma \ref{Grpr0}, and so $\delta_{A,B}\in\ODI_n$.

Conversely, suppose that $\delta_{A,B}\in\ODI_n$. Then, in particular,
$\d(i_p,i_{p+1}) =\d(i_p\delta_{A,B},i_{p+1}\delta_{A,B})=\d(j_p,j_{p+1})$, for $1\leqslant p\leqslant k-1$, and
$\d(i_1,i_k)=\d(i_1\delta_{A,B},i_k\delta_{A,B})=\d(j_1,j_k)$, whence $\d(A)=\d(B)$.

\smallskip

Next, we prove 2. If $\d(A)=\d(Bh)$ then, by 1, $\delta_{A,Bh}\in\ODI_n$ and so,
as $k\geqslant2$ and $h|_{Bh}$ is an order-reversing partial isometry from $Bh$ onto $B$, it follows that
$\delta_{A,Bh}h|_{Bh}$ is an order-reversing partial isometry from $A$ onto $B$.

Conversely,  suppose there exists an order-reversing partial isometry $\xi$ from $A$ onto $B$. Then
$$
\xi= \left(
             \begin{array}{cccc}
               i_1 & i_2 & \cdots & i_k \\
               j_k & j_{k-1} & \cdots & j_1 \\
             \end{array}
\right)
$$
and $Bh=\{n-j_k+1<n-j_{k-1}+1<\cdots<n-j_1+1\}$, whence
$$
\delta_{A,Bh}= \left(
             \begin{array}{cccc}
               i_1 & i_2 & \cdots & i_k \\
               n-j_k+1 & n-j_{k-1}+1 & \cdots & n-j_1+1 \\
             \end{array}
\right)
=\xi h|_B \in\ODI_n
$$
and so, by 1, $\d(A)=\d(Bh)$.

\smallskip

Finally, we prove 3. First, suppose that $\d(A)=\d(Bg^{-s})$ for some $0\leqslant s\leqslant n-1$.
Then, we have $\delta_{A,Bg^{-s}}\in\ODI_n$, by 1.
Since $g^s|_{Bg^{-s}}$ is an orientation-preserving partial isometry from $Bg^{-s}$ onto $B$,
then $\delta_{A,Bg^{-s}}g^s|_{Bg^{-s}}$  is an orientation-preserving partial isometry from $A$ onto $B$.

Conversely, suppose there exists an orientation-preserving partial isometry $\xi$ from $A$ onto $B$.
If $k=2$ then
$$
\xi= \left(
             \begin{array}{cc}
               i_1 & i_2 \\
               j_1 & j_2 \\
             \end{array}
\right)=\delta_{A,B}
\quad\text{or}\quad
\xi= \left(
             \begin{array}{cc}
               i_1 & i_2 \\
               j_2 & j_1 \\
             \end{array}
\right)
$$
and so, in both cases, we get $\delta_{A,B}\in\ODI_n$, whence $\d(A)=\d(B)(=\d(Bg^{-s})$, with $s=0$), by 1.
Thus, suppose that $k>2$. Then, since an orientation-preserving restriction of an orientation-reversing permutation
must have rank less than or equal to two (cf. proof of Theorem \ref{sizeopcoc}),
there exists $0\leqslant s\leqslant n-1$ such that $\xi=g^s|_A$.
Therefore, $A=Bg^{-s}$ and so $\delta_{A,Bg^{-s}}=\delta_{A,A}=\id_A\in\ODI_n$, since any partial identity is an order-preserving partial isometry.
Hence, by 1, it follows that $\d(A)=\d(Bg^{-s})$, as required.
\end{proof}

\begin{theorem}\label{greenJ}
Let $M \in \{\ODI_n, \MDI_n, \OPDI_n\}$ and let $\alpha, \beta \in M$. Then,
$\alpha \mathscr{J} \beta$ if and only if one of the following properties is satisfied:
\begin{enumerate}
\item $|\dom(\alpha)| = |\dom(\beta)| \leqslant 1$;
\item $|\dom(\alpha)| = |\dom(\beta)| \geqslant 2$ and
$$
\d(\dom(\alpha)) = \left\{\begin{array}{ll}
                                                               \d(\dom(\beta)) & \mbox{if $M = \ODI_n$} \\
                                                               \mbox{$\d(\dom(\beta))$ or $\d(\dom(h\beta))$} & \mbox{if $M = \MDI_n$} \\
                                                              \mbox{$\d(\dom(g^s\beta))$ for some $0\leqslant s\leqslant n-1$} & \mbox{if $M = \OPDI_n$}.
                                                             \end{array}\right.
$$
\end{enumerate}
\end{theorem}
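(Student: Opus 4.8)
The plan is to peel the abstract relation $\mathscr{J}$ down to a concrete combinatorial statement about the domains of $\alpha$ and $\beta$, and then to quote Lemma~\ref{relJ}. First I would establish the following reformulation, valid for all $\alpha,\beta\in M$. Since $M$ is finite, $\mathscr{J}=\mathscr{D}=\mathscr{R}\circ\mathscr{L}$ on $M$; since $M$ is an inverse submonoid of $\I_n$, the relations $\mathscr{R}$ and $\mathscr{L}$ on $M$ are equality of domains and equality of images, respectively; and since $M$ is closed under inversion, $\beta^{-1}\in M$. Combining these with a short computation (passing between $\delta$ and $\delta\beta^{-1}$, and between $\gamma$ and $\gamma\beta$) gives
$$
\alpha\,\mathscr{J}\,\beta \text{ in } M
\quad\Longleftrightarrow\quad
\text{there is } \gamma\in M \text{ with } \dom(\gamma)=\dom(\alpha) \text{ and } \im(\gamma)=\dom(\beta).
$$
As any $\gamma\in M\subseteq\I_n$ has $|\dom(\gamma)|=|\im(\gamma)|$, this forces $|\dom(\alpha)|=|\dom(\beta)|$ whenever $\alpha\,\mathscr{J}\,\beta$, which is the cardinality requirement common to both listed properties.

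Next I would dispose of the degenerate case $|\dom(\alpha)|=|\dom(\beta)|\leqslant1$. Writing $\dom(\alpha)=\{i\}$ and $\dom(\beta)=\{p\}$ (or $\alpha=\beta=\emptyset$), the transformation $\trans{i \\ p}$ (resp.\ $\emptyset$) is a partial isometry of $C_n$ that is vacuously order-preserving, hence lies in $\ODI_n\subseteq M$; by the reformulation it witnesses $\alpha\,\mathscr{J}\,\beta$. So the first listed property always implies $\alpha\,\mathscr{J}\,\beta$, and from now on I may assume $k:=|\dom(\alpha)|=|\dom(\beta)|\geqslant2$.

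For the remaining case set $A=\dom(\alpha)$ and $B=\dom(\beta)$. Unravelling the definitions of the three monoids, a $\gamma\in M$ with $\dom(\gamma)=A$ and $\im(\gamma)=B$ exists if and only if there is, respectively, an order-preserving partial isometry of $C_n$ from $A$ onto $B$ (case $M=\ODI_n$), a monotone one, i.e.\ an order-preserving or an order-reversing one (case $M=\MDI_n$), or an orientation-preserving one (case $M=\OPDI_n$). Lemma~\ref{relJ}(1)--(3) converts these into, respectively, $\d(A)=\d(B)$; $\d(A)=\d(B)$ or $\d(A)=\d(Bh)$; and $\d(A)=\d(Bg^{-s})$ for some $0\leqslant s\leqslant n-1$. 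Finally, since $g$ and $h$ are permutations of $\Omega_n$, we have $\dom(h\beta)=\dom(\beta)h=Bh$ and $\dom(g^s\beta)=\dom(\beta)g^{-s}=Bg^{-s}$, so these are precisely the conditions stated in the second listed property for $M=\ODI_n$, $M=\MDI_n$ and $M=\OPDI_n$. This finishes the proof.

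The substantive work has already been done in Lemmas~\ref{Grpr0} and~\ref{relJ}; what is left is essentially bookkeeping. The only delicate point is the reformulation in the first paragraph --- in particular the observation that $\im(\beta)$ may be replaced by $\dom(\beta)$ --- since this is exactly what makes it possible to phrase the whole criterion through the distance vector $\d(\cdot)$ of domains.
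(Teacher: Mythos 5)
Your proof is correct and follows essentially the same route as the paper: both reduce $\alpha\,\mathscr{J}\,\beta$ to the existence of an element of $M$ mapping $\dom(\alpha)$ onto $\dom(\beta)$ and then invoke Lemma~\ref{relJ} together with $\dom(h\beta)=\dom(\beta)h$ and $\dom(g^s\beta)=\dom(\beta)g^{-s}$. The only cosmetic difference is that you obtain this reduction from $\mathscr{J}=\mathscr{D}=\mathscr{R}\circ\mathscr{L}$ and the standard domain/image descriptions of $\mathscr{R}$ and $\mathscr{L}$ (passing through $\gamma\beta$ and $\delta\beta^{-1}$), whereas the paper manipulates a factorization $\alpha=\gamma\beta\lambda$ directly and exhibits $\lambda=\beta^{-1}\gamma^{-1}\alpha$ for the converse.
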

\begin{proof}
First, suppose that $\alpha\mathscr{J} \beta$ (in $M$). Then $\alpha \mathscr{J} \beta$ in $\I_n$ and so $|\dom(\alpha)| = |\dom(\beta)|$.
If $|\dom(\alpha)| = |\dom(\beta)| \leqslant 1$ there is nothing more to prove.

Thus, suppose that $|\dom(\alpha)| = |\dom(\beta)| \geqslant 2$ and let $\gamma, \lambda \in M$ be such that $\alpha = \gamma\beta\lambda$.
We can assume, without loss of generality (by considering $\gamma|_{\dom(\alpha)}$ instead of $\gamma$, if necessary),
that $\dom(\gamma) = \dom(\alpha)$.
Hence $\im(\gamma)=\dom(\beta)$.
Then, $\gamma$ is an order-preserving partial isometry from $\dom(\alpha)$ onto $\dom(\beta)$, if $M=\ODI_n$,
$\gamma$ is an order-preserving or order-reversing partial isometry from $\dom(\alpha)$ onto $\dom(\beta)$, if $M=\MDI_n$, and
$\gamma$ is an orientation-preserving partial isometry from $\dom(\alpha)$ onto $\dom(\beta)$, if $M=\OPDI_n$.
Therefore, by Lemma \ref{relJ}, we have
$$
\mbox{$\d(\dom(\alpha))=\d(\dom(\beta))$, if $M=\ODI_n$,}
$$
$$
\mbox{$\d(\dom(\alpha))=\d(\dom(\beta))$ or $\d(\dom(\alpha))=\d(\dom(\beta)h)=d(\dom(\beta)h^{-1})=\d(\dom(h\beta))$,  if $M=\MDI_n$,}
$$
and
$$
\mbox{$\d(\dom(\alpha))=\d(\dom(\beta)g^{-s})=\d(\dom(g^s\beta))$, for some $0\leqslant s\leqslant n-1$, if $M=\OPDI_n$.}
$$

\smallskip

Conversely, suppose that 1 or 2 is satisfied. If $|\dom(\alpha)| = |\dom(\beta)| \leqslant 1$ then, as $M$ contains all partial permutations of rank
less than or equal to one, it is clear that $\alpha \mathscr{J} \beta$. So, suppose that 2 holds.
Since $\dom(h\beta)=\dom(\beta)h$ and $\dom(g^s\beta)=\dom(\beta)g^{-s}$ for all $0\leqslant s\leqslant n-1$, by Lemma \ref{relJ},
we can conclude that $M$ possesses a partial transformation $\gamma$ from $\dom(\alpha)$ onto $\dom(\beta)$.
Take also $\lambda = \beta^{-1}\gamma^{-1}\alpha\in M$.
Hence, since $\gamma\beta\beta^{-1}\gamma^{-1}$ and $\gamma^{-1}\alpha\alpha^{-1}\gamma$ are idempotents, we have
$$
\gamma\beta\lambda = \gamma\beta\beta^{-1}\gamma^{-1}\alpha = \id_{\dom(\alpha)}\alpha=\alpha
\quad\text{and}\quad
\gamma^{-1}\alpha\lambda^{-1} = \gamma^{-1}\alpha\alpha^{-1}\gamma\beta = \id_{\dom(\beta)}\beta=\beta
$$
and so $\alpha \mathscr{J} \beta$, as required.
\end{proof}

\section{Generators and ranks} \label{ranks}

This section is devoted to the main result of this paper. We will determine a generating set of minimal size for each of
the monoids $\ODI_{n}$, $\MDI_{n}$ and $\OPDI_{n}$.

Let
$$
e_i=\id_{\Omega_n\setminus\{i\}}=
\begin{pmatrix} 1&\cdots&i-1&i+1&\cdots&n\\
1&\cdots&i-1&i+1&\cdots&n
\end{pmatrix}\in\DI_n,
$$
for $1\leqslant i\leqslant n$. Clearly, for $1\leqslant i,j\leqslant n$, we have $e_i^2=e_i$ and $e_ie_j=\id_{\Omega_n\setminus\{i,j\}}=e_je_i$.
More generally, for any $X\subseteq\Omega_n$, we get $\Pi_{i\in X}e_i=\id_{\Omega_n\setminus X}$.

Now, take $\alpha\in\DI_n$. Then, since the elements of $\DI_n$ are precisely the restrictions of $\D_{2n}$,
we have $\alpha=h^jg^i|_{\dom(\alpha)}$,
for some $j\in\{0,1\}$ and $i\in\{0,1,\ldots,n-1\}$.
Hence $\alpha=h^jg^i\id_{\dom(\alpha)}=h^jg^i\Pi_{k\in\Omega_n\setminus\dom(\alpha)}e_k$.
Therefore
$
\{g,h,e_1,e_2,\ldots,e_n\}
$
is a generating set of $\DI_n$. Moreover, since $e_i=g^{n-i}e_ng^i$ for all $i\in\{1,2,\ldots,n\}$,
it follows that $\{g,h,e_n\}$ is also a generating set of $\DI_n$. In fact, as $g^n=\id$, we also have $e_n=g^ie_ig^{n-i}$ and so
each set $\{g,h,e_i\}$, with $1\leqslant i\leqslant n$, generates $\DI_n$ (see \cite{Fernandes&Paulista:2022sub}).

\smallskip

Notice that $e_1,e_2,\ldots,e_n$ are elements of $\ODI_n$, $\MDI_n$ and $\OPDI_n$.
Consider the elements
$$
x=\begin{pmatrix}
1&2&\cdots&n-1\\
2&3&\cdots&n
\end{pmatrix}
\quad\text{and}\quad
y=x^{-1}=\begin{pmatrix}
2&3&\cdots&n\\
1&2&\cdots&n-1
\end{pmatrix}
$$
of $\ODI_n$ with rank $n-1$ and the elements
$$
x_i=\begin{pmatrix}
1&1+i\\
1&n-i+1
\end{pmatrix}
\quad\text{and}\quad
y_i=x_i^{-1}=\begin{pmatrix}
1&n-i+1\\
1&1+i
\end{pmatrix},
$$
for $1\leqslant i\leqslant\lfloor\frac{n-1}{2}\rfloor$, of $\ODI_n$ with rank $2$.
Observe that $\d(1,1+i)=i$, for $1\leqslant i\leqslant\lfloor\frac{n-1}{2}\rfloor$, and $\lfloor\frac{n-1}{2}\rfloor<\frac{n}{2}$.

\begin{proposition}\label{gensets}
The monoids $\ODI_n$, $\MDI_n$ and $\OPDI_n$ are generated by
$$
\{x,y,e_2,\ldots,e_{n-1},x_1,x_2,\ldots,x_{\lfloor\frac{n-1}{2}\rfloor},y_1,y_2,\ldots,y_{\lfloor\frac{n-1}{2}\rfloor}\},
$$
$$
\{h,x,e_2,\ldots,e_{\lfloor\frac{n+1}{2}\rfloor},x_1,x_2,\ldots,x_{\lfloor\frac{n-1}{2}\rfloor},y_1,y_2,\ldots,y_{\lfloor\frac{n-1}{2}\rfloor}\}
$$
and
$$
\{g,e_i,x_1,x_2,\ldots,x_{\lfloor\frac{n-1}{2}\rfloor}\},
\quad \mbox{with $1\leqslant i\leqslant n$,}
$$
respectively.
\end{proposition}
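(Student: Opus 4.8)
The plan is to reduce the whole statement to the case of $\ODI_n$. Write $m=\lfloor\frac{n-1}{2}\rfloor$ and let $T$, $T'$ and $T''$ denote the submonoids of $\ODI_n$, $\MDI_n$ and $\OPDI_n$ generated by the three proposed sets, respectively. Since every proposed generator lies in the corresponding monoid, we have $T\subseteq\ODI_n$, $T'\subseteq\MDI_n$ and $T''\subseteq\OPDI_n$, so only the reverse inclusions require proof. I would establish $\ODI_n\subseteq T$ first and in full, and then deduce $\MDI_n\subseteq T'$ and $\OPDI_n\subseteq T''$ by comparing $\MDI_n$ and $\OPDI_n$ with $\ODI_n$.

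To prove $\ODI_n\subseteq T$, the first step is to produce all partial identities inside $T$: a direct computation gives $yx=e_1$ and $xy=e_n$, hence $e_1,\ldots,e_n\in T$ and therefore $\id_X=\prod_{i\in\Omega_n\setminus X}e_i\in T$ for every $X\subseteq\Omega_n$. The second step relies on the identities $x^k=g^k|_{\{1,\ldots,n-k\}}$ and $y^{n-k}=g^k|_{\{n-k+1,\ldots,n\}}$ (immediate by induction): composing them with partial identities yields $g^k|_A=\id_A x^k\in T$ for every $A\subseteq\{1,\ldots,n-k\}$ and $g^k|_A=\id_A y^{n-k}\in T$ for every $A\subseteq\{n-k+1,\ldots,n\}$, i.e. $T$ contains every order-preserving restriction of a power of $g$. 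By Lemma~\ref{fundlemma} together with the description of the order-preserving restrictions of the elements of $\D_{2n}$ obtained in the proof of Theorem~\ref{sizeopcoc}, this already covers all elements of $\ODI_n$ of rank $\neq2$ as well as all rank-$2$ elements of $\ODI_n$ that are restrictions of some power of $g$.

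It then remains to catch the rank-$2$ elements of $\ODI_n$ that are \emph{not} restrictions of a power of $g$. Such an element $\alpha=\trans{a&b\\ c&d}$ (with $a<b$, $c<d$) satisfies $\d(a,b)=\d(c,d)=:i$ and, since it is not a restriction of a power of $g$, one has $b-a\neq d-c$ and hence $i<\frac n2$, so that $1\leqslant i\leqslant m$ and the generator $x_i$ is available; replacing $\alpha$ by $\alpha^{-1}$ if necessary (the proposed generating set, and hence $T$, being closed under inversion), we may assume $b=a+i$ and $d=c+n-i$. I would then write $\alpha=\gamma_1\,x_i\,\gamma_2$, where $\gamma_1=\trans{a&a+i\\ 1&1+i}$ and $\gamma_2=\trans{1&n-i+1\\ c&d}$. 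Both $\gamma_1$ and $\gamma_2$ are of the form $\trans{p&q\\ r&s}$ with $q-p=s-r$, hence are order-preserving rank-$2$ restrictions of powers of $g$ (with domain inside one of the two relevant intervals) and so belong to $T$ by the second step, while $\gamma_1 x_i\gamma_2=\alpha$ is a routine composition. This completes the proof that $\ODI_n\subseteq T$. Pinning down exactly this family of rank-$2$ elements (deciding which rank-$2$ maps fail to be restrictions of powers of $g$) and choosing $\gamma_1,\gamma_2$ correctly is the only genuinely delicate point of the whole argument; everything else is bookkeeping.

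It remains to deal with $\MDI_n$ and $\OPDI_n$. For $\MDI_n$: a short computation gives $y=hxh\in T'$, and since $he_jh=e_{n-j+1}$, the set $\{e_2,\ldots,e_{\lfloor\frac{n+1}{2}\rfloor}\}$ together with its conjugate by $h$ already provides $e_2,\ldots,e_{n-1}$; hence $T'$ contains the generating set of $\ODI_n$ from the first part, so $\ODI_n\subseteq T'$. As each order-reversing $\beta\in\MDI_n$ satisfies $\beta h\in\ODI_n$ (whence $\beta=(\beta h)h\in\ODI_n h$), we obtain $\MDI_n=\ODI_n\cup\ODI_n h\subseteq T'$ using $h\in T'$. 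For $\OPDI_n$: conjugating the generator $e_i$ by the powers of $g$ produces $e_1,\ldots,e_n$ inside $T''$, and then $x=e_ng$, $y=e_1g^{n-1}$ and $y_j=g^jx_jg^j$ also lie in $T''$, so again $T''$ contains the generating set of $\ODI_n$ and $\ODI_n\subseteq T''$. Finally, by \cite[Proposition~3.1]{Fernandes:2000} (already invoked in the proof of Lemma~\ref{Grpr0}), every element of $\POPI_n$, in particular every $\alpha\in\OPDI_n$, is of the form $g^s\beta$ with $\beta\in\POI_n$; then $\beta=g^{n-s}\alpha\in\DI_n\cap\POI_n=\ODI_n$, so $\OPDI_n=\bigcup_{s=0}^{n-1}g^s\ODI_n\subseteq T''$ because $g\in T''$, which finishes the proof.
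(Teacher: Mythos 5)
Your proposal is correct and follows essentially the same route as the paper: obtain all partial identities from $yx=e_1$ and $xy=e_n$, realize every order-preserving restriction of a power of $g$ via $\id_A x^k$ or $\id_A y^{n-k}$, use $x_i,y_i$ to capture the remaining rank-$2$ elements, and reduce $\MDI_n$ and $\OPDI_n$ to $\ODI_n$ via $h$ and $g$ exactly as in the paper. Your factorization $\alpha=\gamma_1 x_i\gamma_2$ is just an element-by-element restatement of the paper's explicit formulas $hg^k|_{\{i,j\}}=y^{i-1}x_{j-i}x^{k-i}$ (resp.\ $y^{i-1}y_{n-j+i}x^{k-i}$), with the $\d(a,b)=\frac{n}{2}$ case absorbed more cleanly into the ``restriction of a power of $g$'' case.
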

\begin{proof}
First, we show that $\{x,y,e_2,\ldots,e_{n-1},x_1,x_2,\ldots,x_{\lfloor\frac{n-1}{2}\rfloor},y_1,y_2,\ldots,y_{\lfloor\frac{n-1}{2}\rfloor}\}$ generates $\ODI_n$.

Let $M$ be the monoid generated by
$\{x,y,e_2,\ldots,e_{n-1},x_1,x_2,\ldots,x_{\lfloor\frac{n-1}{2}\rfloor},y_1,y_2,\ldots,y_{\lfloor\frac{n-1}{2}\rfloor}\}\subseteq\ODI_n$.
Then $M$ is contained in $\ODI_n$. In order to show the converse inclusion,
notice first that $e_1=yx$ and $e_n=xy$, whence $e_1,e_2,\ldots,e_n\in M$, and so $M$ contains all restrictions of each of its elements.

Next, since the elements of $\DI_n$ are the restrictions of $\D_{2n}$, then the elements of $\ODI_n$
are the order-preserving restrictions of $g^k$ and $hg^k$ for $0\leqslant k\leqslant n-1$, which are, in turn, the restrictions of
$$
g^k|_{\{1,2,\ldots,n-k\}},\quad  g^k|_{\{n-k+1,\ldots,n\}}\quad\text{and}\quad hg^k|_{\{i,j\}},
$$
with $1\leqslant i\leqslant k$ and $k+1\leqslant j\leqslant n$. Therefore, it suffices to show that these elements belong to $M$.

Notice that, if $k=0$ then $g^k|_{\{1,2,\ldots,n-k\}}$ and $g^k|_{\{n-k+1,\ldots,n\}}$ are the identity transformation and the empty transformation, respectively,
and so both belong to $M$.
So, let $1 \leqslant k \leqslant n-1$. Then, we have $g^k|_{\{1,2,\ldots,n-k\}} = x^k \in M$ and $g^k|_{\{n-k+1,\ldots,n\}} = y^{n-k} \in M$.
On the other hand, for $1\leqslant i\leqslant k$ and $k+1\leqslant j\leqslant n$, we get
$$
hg^k|_{\{i,j\}} = \left\{\begin{array}{ll}
             \prod_{\ell \in \Omega_n\setminus\{i,j\}}e_\ell & \mbox{if $i=\frac{k+1}{2}$} \\
             \prod_{\ell \in \Omega_n\setminus\{i,j\}}e_\ell x^{k-2i+1} & \mbox{if $i<\frac{k+1}{2}$} \\
             \prod_{\ell \in \Omega_n\setminus\{i,j\}}e_\ell y^{2i-k-1} & \mbox{if $i>\frac{k+1}{2}$},
           \end{array}\right.
$$
if $j-i = \frac{n}{2}$, and
$$
hg^k|_{\{i,j\}} = \left\{\begin{array}{ll}
             y^{i-1} x_{j-i} x^{k-i} & \mbox{if $j-i \leqslant \lfloor\frac{n-1}{2}\rfloor$} \\
             y^{i-1} y_{n-j+i} x^{k-i} & \mbox{if $j-i > \lfloor\frac{n-1}{2}\rfloor$},
           \end{array}\right.
$$
if $j-i \neq \frac{n}{2}$ (as usual, putting $x^0 = y^0 = \id$), and so $hg^k|_{\{i,j\}}\in M$.

Thus, we proved that $M=\ODI_n$.

\smallskip

Next, regarding the monoid $\MDI_n$, we have $\alpha=(\alpha h)h$ and $\alpha h\in \ODI_n$ for all $\alpha\in\MDI_n\setminus\ODI_n$,
which allows us to deduce that $\MDI_n$ is generated by $\ODI_n\cup\{h\}$. On the other hand,
we have $y=hxh$ and $he_ih=e_{n-i+1}$ for all $1\leqslant i\leqslant n$. Thus, we conclude that
$\{h,x,e_2,\ldots,e_{\lfloor\frac{n+1}{2}\rfloor},x_1,x_2,\ldots,x_{\lfloor\frac{n-1}{2}\rfloor},y_1,y_2,\ldots,y_{\lfloor\frac{n-1}{2}\rfloor}\}$
generates $\MDI_n$.

\smallskip

Finally, we turn our attention to the monoid $\OPDI_n$.
Let $\alpha\in\OPDI_n$. Then $\alpha\in\POPI_n$ and so, by \cite[Proposition 3.1]{Fernandes:2000},
there exist $0\leqslant k\leqslant n-1$ and $\beta\in\POI_n$ such that $\alpha=g^k\beta$.
Since $\beta=g^{n-k}\alpha\in\DI_n$, we get $\beta\in\ODI_n$.
So $\alpha=g^k\beta$, with $\beta\in\ODI_n$. Therefore, $\OPDI_n$ is generated by $\ODI_n\cup\{g\}$.
On the other hand, we have $e_j=g^{n-j}e_ng^{j}$ for all $1\leqslant j\leqslant n$,
$g^\ell x_\ell g^\ell =y_\ell$ for all $1\leqslant \ell\leqslant \lfloor\frac{n-1}{2}\rfloor$, $x=e_ng$ and $y=g^{n-1}e_n$.
Hence, $\OPDI_n$ is generated by $\{g,e_n,x_1,x_2,\ldots,x_{\lfloor\frac{n-1}{2}\rfloor}\}$.

Let $1\leqslant i\leqslant n$. Since $e_n=g^ie_ig^{n-i}$, then $\{g,e_i,x_1,x_2,\ldots,x_{\lfloor\frac{n-1}{2}\rfloor}\}$
also generates $\OPDI_n$, as required.
\end{proof}

In order to determine the ranks of these monoids, we first prove the following lemma:

\begin{lemma}\label{ranklem}
Let $1\leqslant i\leqslant\lfloor\frac{n-1}{2}\rfloor$ and
let $\gamma_1,\gamma_2,\ldots,\gamma_k,\lambda_1,\lambda_2,\ldots,\lambda_\ell$ be $k+\ell$ ($k,\ell\geqslant1$) elements of $\DI_n$
such that $x_i=\gamma_1\gamma_2\cdots\gamma_k$ and $y_i=\lambda_1\lambda_2\cdots\lambda_\ell$.
\begin{enumerate}
\item If $\gamma_1,\gamma_2,\ldots,\gamma_k,\lambda_1,\lambda_2,\ldots,\lambda_\ell\in\MDI_n$
then there exist $1\leqslant p\leqslant k$, $1\leqslant q\leqslant \ell$, $1\leqslant a<b\leqslant n$ and $1\leqslant c<d\leqslant n$
such that $\dom(\gamma_p)=\{a,b\}$, $\dom(\lambda_q)=\{c,d\}$, $b-a=i$ and $d-c=n-i$.
\item If $\gamma_1,\gamma_2,\ldots,\gamma_k\in\OPDI_n$
then there exist $1\leqslant p\leqslant k$ and $1\leqslant a<b\leqslant n$
such that $\dom(\gamma_p)=\{a,b\}$ and $b-a\in\{i,n-i\}$.
\end{enumerate}
Consequently, any generating set of $\ODI_n$, $\MDI_n$ and $\OPDI_n$ has at least $2\lfloor\frac{n-1}{2}\rfloor$, $2\lfloor\frac{n-1}{2}\rfloor$ and $\lfloor\frac{n-1}{2}\rfloor$ transformations of rank two, respectively.
\end{lemma}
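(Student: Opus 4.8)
The plan is to follow the two points of $\dom(x_i)$ (respectively $\dom(y_i)$) through a given factorisation, using that every element of $\DI_n$ preserves the geodesic distance of $C_n$ while, in addition, every element of $\MDI_n$ of rank at least $3$ preserves the plain difference $b-a$ of a two-element subset $\{a<b\}$ of its domain. Concretely, suppose $\theta=\delta_1\delta_2\cdots\delta_m$ is an element of $\DI_n$ of rank $2$ with $\delta_1,\dots,\delta_m\in\DI_n$, and write $\dom(\theta)=\{u_0,v_0\}$. Setting $u_j=u_0\delta_1\cdots\delta_j$ and $v_j=v_0\delta_1\cdots\delta_j$, these are defined and distinct for $0\le j\le m$, with $\{u_{j-1},v_{j-1}\}\subseteq\dom(\delta_j)$; since each $\delta_j$ is a partial isometry of $C_n$, $\d(u_j,v_j)=\d(u_0,v_0)$ for every $j$, and $\rank(\delta_j)\ge\rank(\theta)=2$, with $\dom(\delta_j)=\{u_{j-1},v_{j-1}\}$ whenever $\rank(\delta_j)=2$. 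I will also use the following structural fact, extracted from the proofs of Theorem \ref{sizeopcoc} and Proposition \ref{gensets}: since an orientation-preserving restriction of $hg^k$ has rank at most $2$, every element of $\OPDI_n$ of rank at least $3$ is a restriction of a power of $g$; and since (by the description of $\ODI_n$ in the proof of Proposition \ref{gensets}) an order-preserving element of $\DI_n$ of rank at least $3$ is a restriction of some $g^k|_{\{1,\dots,n-k\}}$ or $g^k|_{\{n-k+1,\dots,n\}}$, it acts on its domain as an ``unwrapped'' translation $x\mapsto x+k$ or $x\mapsto x+k-n$; composing on the right with $h$ then shows that an order-reversing element of $\MDI_n$ of rank at least $3$ acts on its domain as an unwrapped reflection $x\mapsto c-x$. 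In all these cases such an element preserves $b-a$ for every two-element subset $\{a<b\}$ of its domain.

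For statement $2$ I apply the above to $x_i=\gamma_1\cdots\gamma_k$ with $u_0=1$ and $v_0=1+i$, so that $\d(u_j,v_j)=\d(1,1+i)=i$ for all $j$ (using $i\le\lfloor\frac{n-1}{2}\rfloor<\frac n2$). If every $\gamma_j$ had rank at least $3$ then every $\gamma_j$, hence also the composite $x_i$ (a composite of restrictions of powers of $g$ is again a restriction of a power of $g$, since $g^ag^b=g^{a+b}$), would be a restriction of a power of $g$; but a restriction $g^m|_{\{1,1+i\}}$ sends $1$ to $1+m$, so $1\cdot x_i=1$ forces $m=0$ and $x_i=\id_{\{1,1+i\}}$, contradicting $(1+i)x_i=n-i+1\ne 1+i$. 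Hence some $\gamma_p$ has rank $2$, so $\dom(\gamma_p)=\{u_{p-1},v_{p-1}\}$; writing this set as $\{a<b\}$ we have $\d(a,b)=i$, i.e.\ $b-a\in\{i,n-i\}$, as required.

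For statement $1$ I apply the tool to $x_i=\gamma_1\cdots\gamma_k$ with all $\gamma_j\in\MDI_n$, $u_0=1$, $v_0=1+i$. As above $\d(u_j,v_j)=i$ throughout, so $\{u_j,v_j\}=\{a_j<b_j\}$ always has $b_j-a_j\in\{i,n-i\}$; the difference of $\{u_0,v_0\}$ is $i$, that of $\{u_k,v_k\}=\{1,n-i+1\}$ is $n-i$, and $i\ne n-i$, so some step changes this difference. By the structural fact a step $j$ can change the difference only if $\rank(\gamma_j)=2$ (rank-$\ge 3$ steps preserve it, and rank-$\le 1$ steps are impossible); taking $p$ to be the first step at which the difference changes, $\rank(\gamma_p)=2$ and $\dom(\gamma_p)=\{u_{p-1},v_{p-1}\}$ still has difference $i$, so $\dom(\gamma_p)=\{a<b\}$ with $b-a=i$. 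The same argument applied to $y_i=\lambda_1\cdots\lambda_\ell$ --- whose domain $\{1,n-i+1\}$ has difference $n-i$ and whose image $\{1,1+i\}$ has difference $i$ --- produces a rank-$2$ factor $\lambda_q$ with $\dom(\lambda_q)=\{c<d\}$ and $d-c=n-i$. This proves $1$.

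For the last assertion, let $G$ be a generating set of one of the three monoids. If $G$ generates $\ODI_n$ or $\MDI_n$ then $G\subseteq\MDI_n$, so factoring each of $x_1,\dots,x_{\lfloor\frac{n-1}{2}\rfloor}$ and $y_1,\dots,y_{\lfloor\frac{n-1}{2}\rfloor}$ over $G$ and applying statement $1$ yields rank-$2$ members of $G$ whose domain-differences take all the values $1,2,\dots,\lfloor\frac{n-1}{2}\rfloor$ and $n-1,n-2,\dots,n-\lfloor\frac{n-1}{2}\rfloor$; since $2\lfloor\frac{n-1}{2}\rfloor\le n-1$ these are $2\lfloor\frac{n-1}{2}\rfloor$ distinct values, so $G$ contains at least $2\lfloor\frac{n-1}{2}\rfloor$ transformations of rank $2$. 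If $G$ generates $\OPDI_n$ then $G\subseteq\OPDI_n$, and factoring each $x_i$ over $G$ and applying statement $2$ yields rank-$2$ members of $G$ whose domains realise each of the distances $1,2,\dots,\lfloor\frac{n-1}{2}\rfloor$, giving at least $\lfloor\frac{n-1}{2}\rfloor$ transformations of rank $2$. The main obstacle is the structural input of the first paragraph --- that elements of $\MDI_n$ (and of $\OPDI_n$) of rank at least $3$ act on their domains as unwrapped translations or reflections, so cannot alter the difference $b-a$ of a pair of domain points; once that is established, the rest is the bookkeeping described above.
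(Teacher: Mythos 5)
Your proof is correct and follows essentially the same strategy as the paper's: track the two domain points of $x_i$ (resp.\ $y_i$) through the factorisation, observe that factors which do not ``wrap around'' the cycle preserve the plain difference $b-a$ (the paper packages this as its inverted/non-inverted dichotomy, you as ``rank at least $3$ implies difference-preserving''), and conclude that some factor must be a rank-$2$ transformation whose domain is exactly the tracked pair. The only divergence is cosmetic: for part 2 the paper locates the first factor extending to some $hg^t$, while you locate any rank-$2$ factor directly; both yield the stated conclusion.
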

\begin{proof}
First, observe that the last statement of this lemma follows immediately from the conditions 1 (notice that $\ODI_n\subseteq\MDI_n$) and 2 of the lemma, and from the fact that
$\{1,2,\ldots, \lfloor\frac{n-1}{2}\rfloor\}\cap\{n-i\mid 1\leqslant i\leqslant\lfloor\frac{n-1}{2}\rfloor\}=\emptyset$.

\smallskip

We begin by making some considerations about the elements of $\MDI_n$.

Let $\xi$ be an element of $\MDI_n$ with rank greater than or equal to $2$ and take $0\leqslant t\leqslant n-1$ such that $\xi=g^t|_{\dom(\xi)}$ or $\xi=hg^t|_{\dom(\xi)}$.

If either $\xi$ is order-reversing and $\xi=g^t|_{\dom(\xi)}$ or $\xi$ is order-preserving and $\xi=hg^t|_{\dom(\xi)}$
then $\xi$ must have rank $2$:  $\dom(\xi)=\{a<b\}$,
with $1\leqslant a\leqslant n-t<b\leqslant n$, in the first case, and $1\leqslant a\leqslant t<b\leqslant n$, in the last one.
We say that such an element $\xi$ of $\MDI_n$ is \textit{inverted}.

On the other hand, if either $\xi$ is order-preserving and $\xi=g^t|_{\dom(\xi)}$ or $\xi$ is order-reversing and $\xi=hg^t|_{\dom(\xi)}$
then, for all $a,b\in\dom(\xi)$, we have
\begin{equation}\label{case2}
\mbox{$|a\xi-b\xi|=|a-b|$.}
\end{equation}
Notice that if $a,b\in\dom(\xi)$ are such that $a<b$ then, in the first case, $1\leqslant a<b\leqslant n-t$ or $n+t+1\leqslant a<b\leqslant n$ and, in the second case,
$1\leqslant a<b\leqslant t$ or $t+1\leqslant a<b\leqslant n$. We say that such an element $\xi$ of $\MDI_n$ is \textit{non-inverted}.

Next, let $\xi_1,\xi_2,\ldots,\xi_r$ be $r$  ( $r\geqslant1$) non-inverted elements of $\MDI_n$ such that $\rank(\xi_1\xi_2\cdots\xi_r)\geqslant2$.
Then, for all $a,b\in\dom(\xi_1\xi_2\cdots\xi_r)$, by applying consecutively (\ref{case2}) to $\xi_r,\xi_{r-1},\ldots,\xi_1$, we obtain
\begin{equation}\label{morecase2}
\mbox{$|a\xi_1\xi_2\cdots\xi_r-b\xi_1\xi_2\cdots\xi_r|=|a-b|$.}
\end{equation}

Now, in order to prove 1, suppose that $\gamma_1,\gamma_2,\ldots,\gamma_k,\lambda_1,\lambda_2,\ldots,\lambda_\ell\in\MDI_n$
(keep in mind that $\gamma_1\gamma_2\cdots\gamma_k=x_i$ and $\lambda_1\lambda_2\cdots\lambda_\ell=y_i$).

If $\gamma_1,\gamma_2,\ldots,\gamma_k$ are all non-inverted elements of $\MDI_n$ then, by (\ref{morecase2}), we have
$$
n-i=|1-(n-1+i)|= |1x_i-(1+i)x_i|= |1\gamma_1\gamma_2\cdots\gamma_k-(1+i)\gamma_1\gamma_2\cdots\gamma_k| = |1-(1+i)|=i,
$$
which is a contradiction. Thus, at least one of the elements $\gamma_1,\gamma_2,\ldots,\gamma_k$ is inverted.
Let $1\leqslant p\leqslant k$ be the smallest index such that $\gamma_p$ is inverted.
Then, $\gamma_p$ has rank $2$ and, since $1\gamma_1\cdots\gamma_{p-1}, (1+i)\gamma_1\cdots\gamma_{p-1}\in\dom(\gamma_p)$,
we have $\dom(\gamma_p)=\{1\gamma_1\cdots\gamma_{p-1}, (1+i)\gamma_1\cdots\gamma_{p-1}\}$ and, by (\ref{morecase2}),
$$
|1\gamma_1\cdots\gamma_{p-1}  - (1+i)\gamma_1\cdots\gamma_{p-1}|=|1-(1+i)|=i.
$$

Similarly, if $\lambda_1,\lambda_2,\ldots,\lambda_\ell$ are all non-inverted elements of $\MDI_n$ then, by (\ref{morecase2}), we have
$$
i=|1-(1+i)|= |1y_i-(n-1+i)y_i|= |1\lambda_1\lambda_2\cdots\lambda_\ell-(n-1+i)\lambda_1\lambda_2\cdots\lambda_\ell| = |1-(n-i+1)|=n-i,
$$
which is also a contradiction. Thus, at least one of the elements $\lambda_1,\lambda_2,\ldots,\lambda_\ell$ is inverted and we may take
the smallest index $1\leqslant q\leqslant \ell$ such that $\lambda_{q}$ is inverted.
Since $1\lambda_1\cdots\lambda_{q-1}, (n+i-1)\lambda_1\cdots\lambda_{q-1}\in\dom(\lambda_{q})$ and $\lambda_{q}$ has rank $2$,
we have $\dom(\lambda_{q})=\{1\lambda_1\cdots\lambda_{q-1}, (n-i+1)\lambda_1\cdots\lambda_{q-1}\}$ and, by (\ref{morecase2}),
$$
|1\lambda_1\cdots\lambda_{q-1}  - (n-i+1)\lambda_1\cdots\lambda_{q-1}|=|1-(n-i+1)|=n-i.
$$
Therefore, we proved 1.

\smallskip

To prove 2, suppose that $\gamma_1,\gamma_2,\ldots,\gamma_k\in\OPDI_n$
(remember we have $\gamma_1\gamma_2\cdots\gamma_k=x_i$).
We begin by observing that $x_i=hg|_{\{1,1+i\}}$. Since $\d(1,1+i)=i<\frac{n}{2}$,
then $hg$ is the only extension in $\D_{2n}$ of $x_i$, by Lemma \ref{fundlemma}.
If for all $1\leqslant j\leqslant k$ there exists $0\leqslant t_j\leqslant n-1$ such that $\gamma_j=g^{t_j}|_{\dom{\gamma_j}}$, then
$x_i=g^{\sum_{j=1}^{k}t_j}|_{\{1,1+i\}}$, which contradicts the previous conclusion.
Hence, there exists $1\leqslant p\leqslant k$ such that $\gamma_p=hg^t|_{\dom(\gamma_p)}$, for some $0\leqslant t\leqslant n-1$.
Let us assume that the index $p$ is the smallest under these conditions.
Since $\gamma_p$ preserves the orientation, then $\dom(\gamma_p)=\{a,b\}$, for some $1\leqslant a\leqslant t<b\leqslant n$.
As $1\gamma_1\cdots\gamma_{p-1}, (1+i)\gamma_1\cdots\gamma_{p-1}\in\dom(\gamma_p)$,
it follows that $\dom(\gamma_p)=\{1\gamma_1\cdots\gamma_{p-1}, (1+i)\gamma_1\cdots\gamma_{p-1}\}$.

On the other hand, by the minimality of $p$,
we have $\gamma_1\cdots\gamma_{p-1}=g^s|_{\dom(\gamma_1\cdots\gamma_{p-1})}$, for some $0\leqslant s\leqslant n-1$.
Hence
$$
|1\gamma_1\cdots\gamma_{p-1}  - (1+i)\gamma_1\cdots\gamma_{p-1}|=|1g^s-(1+i)g^s| \in \{i,n-i\},
$$
as required.
\end{proof}

Recall that $\ODI_3=\POI_3$, $\MDI_3=\PODI_3$ and $\OPDI_3=\POPI_3$.
Then, the monoids $\ODI_3$, $\MDI_3$ and $\OPDI_3$ have ranks $3$, $3$ and $2$
(see \cite{Fernandes:2000,Fernandes:2001,Fernandes&Gomes&Jesus:2004}), respectively.
For $n$ greater than $3$, we have:

\begin{theorem}\label{rankth}
For $n\geqslant4$, the monoids $\ODI_n$, $\MDI_n$ and $\OPDI_n$ have ranks
$n+2\lfloor\frac{n-1}{2}\rfloor$, $2+3\lfloor\frac{n-1}{2}\rfloor$ and $2+\lfloor\frac{n-1}{2}\rfloor$,
respectively.
\end{theorem}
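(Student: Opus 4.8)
The plan is to obtain each rank as a matching upper and lower bound. The upper bounds are immediate from Proposition~\ref{gensets}: since $\lfloor\frac{n+1}{2}\rfloor=1+\lfloor\frac{n-1}{2}\rfloor$, the three exhibited generating sets have sizes $2+(n-2)+2\lfloor\frac{n-1}{2}\rfloor=n+2\lfloor\frac{n-1}{2}\rfloor$, $2+(\lfloor\frac{n+1}{2}\rfloor-1)+2\lfloor\frac{n-1}{2}\rfloor=2+3\lfloor\frac{n-1}{2}\rfloor$ and $2+\lfloor\frac{n-1}{2}\rfloor$, respectively. So fix $M\in\{\ODI_n,\MDI_n,\OPDI_n\}$ and a generating set $X$ of $M$; I must show $|X|$ is at least the corresponding number. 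Write $U$, $D$, $R_2$ for the sets of elements of $M$ of rank $n$, $n-1$, $2$; as $n\geqslant4$ these ranks are distinct, so $|X|\geqslant|X\cap U|+|X\cap D|+|X\cap R_2|$, and I bound the three terms separately.

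Since $M$ is an inverse submonoid of $\I_n$, $U$ is precisely the group of units of $M$, and because the rank of a product does not exceed the rank of either factor, a product of elements of $M\setminus U$ stays in $M\setminus U$; hence $X\cap U$ generates $U$ and $|X\cap U|\geqslant\rank(U)$, which is $0$ for $\ODI_n$ ($U=\{\id\}$), $1$ for $\MDI_n$ ($U=\{\id,h\}$) and $1$ for $\OPDI_n$ ($U=\C_n=\langle g\rangle$). By the same submultiplicativity, the last assertion of Lemma~\ref{ranklem} gives $|X\cap R_2|\geqslant2\lfloor\frac{n-1}{2}\rfloor$, $2\lfloor\frac{n-1}{2}\rfloor$ and $\lfloor\frac{n-1}{2}\rfloor$, respectively. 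Everything thus reduces to a lower bound for $|X\cap D|$.

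For $\OPDI_n$ this is easy: $U\cup\{\alpha\in M:\rank(\alpha)\leqslant n-2\}$ is closed under products, so $X$ must contain an element of rank $n-1$, i.e.\ $|X\cap D|\geqslant1$, whence $\rank(\OPDI_n)\geqslant2+\lfloor\frac{n-1}{2}\rfloor$. For $\ODI_n$ I would first record, as in the proof of Theorem~\ref{sizeopcoc} (an order-preserving restriction of $\D_{2n}$ of rank $\geqslant3$ is a restriction of some $g^k$), that $D=\{e_1,\dots,e_n,x,y\}$, and then prove that each of $x$, $y$, $e_2,\dots,e_{n-1}$ is indecomposable: whenever $\alpha$ is one of them and $\alpha=a_1\cdots a_m$ in $\ODI_n$, some $a_j$ equals $\alpha$. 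Here all $a_j$ have rank $\geqslant n-1$, and dropping those equal to $\id$ we may assume all have rank $n-1$, so $\dom(a_1)=\dom(\alpha)$ and $\im(a_m)=\im(\alpha)$; for $2\leqslant i\leqslant n-1$ the only member of $D$ with domain $\Omega_n\setminus\{i\}$ is $e_i$, forcing $a_1=e_i$, while for $\alpha=x$ one gets $a_1\in\{e_n,x\}$, $a_m\in\{e_1,x\}$, and in the remaining case $a_1=e_n$, $a_m=e_1$ the factor $a_2\cdots a_{m-1}$ is a partial transformation extending $x$, hence $=x$ (no permutation extending $x$ lies in $\ODI_n$), so induction on $m$ finishes; $\alpha=y$ is symmetric. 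Thus $\{x,y,e_2,\dots,e_{n-1}\}\subseteq X$ and $\rank(\ODI_n)\geqslant n+2\lfloor\frac{n-1}{2}\rfloor$.

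The hard case is $\MDI_n$. Here $D=\{e_i,e_ih:1\leqslant i\leqslant n\}\cup\{x,y,xh,yh\}$, and using that for $2\leqslant i\leqslant n-1$ the tuple $\d(\Omega_n\setminus\{i\})$ carries its single entry $2$ in position $i-1$, whereas $\d(\Omega_n\setminus\{1\})=\d(\Omega_n\setminus\{n\})$ carries it in position $n-1$, one checks from Theorem~\ref{greenJ} that $D$ is a union of exactly $1+\lfloor\frac{n-1}{2}\rfloor$ $\mathscr{J}$-classes of $M$: the class of all elements of $D$ with domain $\Omega_n\setminus\{1\}$ or $\Omega_n\setminus\{n\}$, and, for each of the $\lfloor\frac{n-1}{2}\rfloor$ orbits of $\{2,\dots,n-1\}$ under $i\mapsto n+1-i$, the class of elements of $D$ with domain in $\{\Omega_n\setminus\{i\},\Omega_n\setminus\{n+1-i\}\}$. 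The key claim is that $X$ meets every such class $J$: writing $\alpha\in J$ as $\alpha=a_1\cdots a_m$ with $a_j\in X$, all $a_j$ have rank $\geqslant n-1$ and not all are units, so with $a_p$ the first factor of rank $n-1$ the element $a_1\cdots a_{p-1}$ is a unit, hence $a_1\cdots a_p$ lies in the $\mathscr{J}$-class of $a_p$ (units preserve $\mathscr{J}$-classes of $\MDI_n$, directly from Theorem~\ref{greenJ}), and every later factor is a unit or of rank $n-1$, so the running product, staying at rank $n-1$, stays $\mathscr{R}$-related to its left part and hence in that same $\mathscr{J}$-class; thus $J$ is the $\mathscr{J}$-class of $a_p\in X$. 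Consequently $|X\cap D|\geqslant1+\lfloor\frac{n-1}{2}\rfloor$ and $\rank(\MDI_n)\geqslant1+(1+\lfloor\frac{n-1}{2}\rfloor)+2\lfloor\frac{n-1}{2}\rfloor=2+3\lfloor\frac{n-1}{2}\rfloor$, which together with the two previous cases gives the theorem.
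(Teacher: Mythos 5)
Your proof is correct and follows essentially the same strategy as the paper's: upper bounds read off from Proposition~\ref{gensets}, and lower bounds obtained by separately counting unit generators, rank-$(n-1)$ generators and rank-$2$ generators, the last via the final assertion of Lemma~\ref{ranklem}. The only divergence is in how the rank-$(n-1)$ counts are justified --- where you argue by indecomposability of $x,y,e_2,\dots,e_{n-1}$ in $\ODI_n$ and by counting the $\mathscr{J}$-classes of $\MDI_n$ met by the rank-$(n-1)$ layer, the paper simply factorizes each partial identity $e_i$ over the generating set and notes that the last non-unit factor must have image $\Omega_n\setminus\{i\}$ (or $\Omega_n\setminus\{n-i+1\}$ after cancelling a trailing $h$), which yields the same counts $n$ and $\lfloor\frac{n+1}{2}\rfloor$.
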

\begin{proof} Let $M\in \{\ODI_n, \MDI_n,\OPDI_n\}$ and let $G$ be a generating set of the monoid $M$.
Notice that the partial identities $e_1,\ldots, e_n$ belong to $M$.

\smallskip

Suppose that $M=\ODI_n$.  Then, the only permutation of $M$ is the identity and so, for $1\leqslant i\leqslant n$,
we have $e_i=\gamma_1\gamma_2\cdots\gamma_k$, for some $\gamma_1,\gamma_2,\ldots,\gamma_k\in G\setminus\{\id\}$ ($k\geqslant1$),
and so $\im(\gamma_k)=\im(e_i)=\Omega_n\setminus\{i\}$. Hence, $G$ possesses at least $n$ elements with rank $n-1$.
Thus, taking into account Lemma \ref{ranklem}, we get $|G|\geqslant n+2\lfloor\frac{n-1}{2}\rfloor$.

\smallskip

Next, suppose that $M=\MDI_n$.
Recall that $M$ has only two permutations: the identity and $h$. So, in particular, we must have $h\in G$.
Let $1\leqslant i\leqslant n$. Then, there exist
$\gamma_1,\gamma_2,\ldots,\gamma_k\in G\setminus\{\id\}$ ($k\geqslant1$) such that $e_i=\gamma_1\gamma_2\cdots\gamma_k$ and:
$\gamma_k\neq h$; or $k\geqslant2$, $\gamma_k=h$ and $\gamma_{k-1}\neq h$.
Hence, $\im(\gamma_k)=\im(e_i)=\Omega_n\setminus\{i\}$ or $\im(\gamma_{k-1})=\im(e_i)h=\Omega_n\setminus\{n-i+1\}$.
Therefore, we can conclude that $G$ possesses at least $\lfloor\frac{n+1}{2}\rfloor$ elements with rank $n-1$.
Thus, in view of Lemma \ref{ranklem},
we obtain $|G|\geqslant 1+ \lfloor\frac{n+1}{2}\rfloor+ 2\lfloor\frac{n-1}{2}\rfloor=2+3\lfloor\frac{n-1}{2}\rfloor$.

\smallskip

Finally, suppose that $M=\OPDI_n$. Since $\OPDI_{n}$ contains the permutation $g$ and a partial identity of rank $n-1$, we
can conclude that $G$ has at least one permutation and one transformation with rank $n-1$. Thus, combining with Lemma \ref{ranklem},
we get $|G|\geqslant 2+\lfloor\frac{n-1}{2}\rfloor$.

\smallskip

Since Proposition \ref{gensets} gives us generating sets of  $\ODI_n$, $\MDI_n$ and $\OPDI_n$ with
$n+2\lfloor\frac{n-1}{2}\rfloor$, $2+3\lfloor\frac{n-1}{2}\rfloor$ and $2+\lfloor\frac{n-1}{2}\rfloor$ elements, respectively,
the theorem follows.
\end{proof}

\subsection*{Acknowledgments}

We acknowledge the anonymous referees for their valuable suggestions.

The authors would like to thank Eden Santos for her help in reviewing the English of this paper.

The second and fourth authors acknowledge funding from FCT - Funda\c c\~ao para a Ci\^encia e a Tecnologia, I.P.,
under the scope of the projects UIDB/00297/2020 and UIDP/00297/2020 (NovaMath - Center for Mathematics and Applications).

\subsection*{Statements and Declarations}

Competing Interests: the authors declare no conflicts of interest.

\bigskip

\lastpage

\end{document}